\def\({\left(}
\def\){\right)}
\def\Nx{\nabla}
\def\eb{\varepsilon}
\def\al{\alpha}
\def\Om{\Omega}
\def\di{\partial_i}
\def\la{\lambda}
\def\R {\mathbb{R}}
\def \ta {\theta}
\def \and{\qquad\text{and}\qquad}
\def\Dx{\Delta}
\newtheorem{proposition}{Proposition}[section]
\newtheorem{theorem}[proposition]{Theorem}
\newtheorem{lemma}[proposition]{Lemma}
\theoremstyle{definition}
\newtheorem{definition}[proposition]{Definition}
\newtheorem{remark}[proposition]{Remark}
\numberwithin{equation}{section}
\def \au {\rm}
\def \no#1#2#3 {{\bf #1} (#3), #2.}
\def \eds#1#2#3 {#1, #2, #3.}
\title[Global Dynamics for 3D Navier-Stokes-Voight Equations]
{Global Attractors and  Determining Modes for the \\3D
Navier-Stokes-Voight Equations}
\author[]
{Varga K. Kalantarov and Edriss S. Titi}
\date{May 27, 2007}
\thanks{ }
\address{(V.K.Kalantarov) Department of mathematics, Ko{\c c} University,
\newline\indent Rumelifeneri Yolu, Sariyer 34450\newline\indent
Sariyer, Istanbul, Turkey} \email{vkalantarov@ku.edu.tr}
\address{(E.S.Titi) Department of mathematics and Department
\newline \indent of Mechanical and Aerospace
Engineering,\newline \indent University of California
\newline \indent Irvine, California 92697,USA. \newline \indent
 {\bf Also:} \newline \indent
 Department of Computer Science
and  Applied Mathematics \newline \indent Weizmann Institute of
Science \newline \indent Rehovot, 76100, Israel}
\email{etiti@math.uci.edu} \,\, \email{edriss.titi@weizmann.ac.il}
\subjclass[2000]{} \keywords{}
\begin{document}

\begin{abstract}
We investigate the long-term dynamics of the three-dimensional
Navier-Stokes-Voight model of viscoelastic incompressible fluid.
Specifically, we derive upper bounds for the number of determining
modes for the 3D Navier-Stokes-Voight equations and for the
dimension of a global attractor of a semigroup generated by these
equations. Viewed from the numerical analysis point of view we
consider the Navier-Stokes-Voight model as a non-viscous (inviscid)
regularization of the three-dimensional Navier-Stokes equations.
Furthermore, we also show that the weak solutions of the Navier-
Stokes-Voight equations converge, in  the appropriate norm, to the
weak solutions of the inviscid simplified Bardina model, as the
viscosity coefficient $\nu \rightarrow 0$.
\end{abstract}

\maketitle

\noindent {\bf MSC Classification:} 37L30, 35Q35, 35Q30, 35B40 \\

\noindent{\bf Keywords:}\  Navier-Stokes-Voight equations, global
attractor, determining modes, regularization of the Navier-Stokes
equations, turbulence models, viscoelastic models.

\section{ Introduction }

 \noindent We consider the three-dimensional Navier-Stokes-Voight (NSV) system of
 equations
\begin{equation}\label{a1}
 v_{t}-\nu \Dx v-\al^2 \Dx v_t+(v\cdot \Nx)v+\Nx p=f(x), \ x \in \Om, t
 \in \R^+,
\end{equation}
\begin{equation}\label{a2}
\mbox{div}\ v=0 , \ x \in  \Om, t \in \R^+; v(x,t)=0, \ x \in
\partial \Om, t\in \R^+,
\end{equation}
\begin{equation}\label{a3}
v(x,0)=v_0(x),\ x \in \Om ,
\end{equation}
where $\Om\subset \R^3$ is a bounded domain with sufficiently smooth
boundary $\partial \Om$, $v=v(x,t)$ is the velocity vector field,
$p$ is the pressure, $\nu>0$  is the kinematic viscosity, $\al$ is a
length scale parameter characterizing the elasticity of the fluid,
and $f$ is a given force field.\\

The system (\ref{a1})-(\ref{a2}) models the dynamics of a
Kelvin-Voight viscoelastic incompressible fluid and was introduced
by A.P. Oskolkov in \cite{Osk1} as a model of motion of linear,
viscoelastic fluids.\\

 The viscous simplified Bardina model was introduced and
 studied in \cite{L1} (see also \cite{L2})
 as a simplified version of the Bardina sub-grid scale model of
 turbulence \cite{BFR}. In \cite{CLT} the viscous and inviscid
 simplified Bardina model were shown to be globally well-posed. It
 is interesting to observe that the inviscid
 simplified Bardina model coincides with the inviscid version of the
 NSV equations \eqref{a1}-\eqref{a3}. Viewed from the numerical
 analysis point of view the authors of \cite{CLT} proposed the
 inviscid simplified Bardina model (or equivalently the inviscid
 NSV equations) as a non-viscous (inviscid) regularization of the 3D Euler
 equations, subject to periodic boundary conditions. Motivated by
 this observation the system \eqref{a1}-\eqref{a3} was also
 proposed in \cite{CLT} as a regularization, for small values of
 $\alpha$, of the 3D Navier-Stokes (NS) equations for the purpose
 of direct numerical simulations for both the periodic and the no-slip
 Dirichlet boundary conditions. \\

In \cite{Osk1} it is shown that the initial boundary value problem
(\ref{a1})-(\ref{a3}) has a unique weak solution. In \cite{Ka1} and
\cite{Ka2} it is shown that the semigroup generated by the problem
(\ref{a1})-(\ref{a3}) has
 a finite dimensional global attractor. \\

 In this paper we give an estimate of the fractal and
 Hausdorff dimensions of the
 global attractor of a dynamical system generated by the problem
 \eqref{a1}-\eqref{a3}, which is an improvement of the estimates
 done in \cite{Ka2}. Moreover, we derive estimates for the number of asymptotic
 determining modes of the solutions of the problem \eqref{a1}-\eqref{a3}.
We also show that there exists a number $m$ such that each
trajectory $v(t)$ on the global attractor of the dynamical system
generated by this problem is uniquely determined by its projection
$P_mv(t)$ onto the $\mbox{span}\{w_1,...,w_m\}$  of the first $m$
eigenfunctions of the Stokes operator. This observation is related
to the notion of continuous data assimilations as it has been
presented in \cite{KrYs},\cite{OlTi} and \cite{OlTi2}. \\

It is worth stressing that by adding the regularizing term
$(-\alpha\Delta v_t)$ to the NS equations the system
(\ref{a1})-(\ref{a3}) changes its parabolic character. In
particular, the 3D system (\ref{a1})-(\ref{a3}) is globally
well-posed forward and backwards in time. The semigroup generated by
the problem (\ref{a1})-(\ref{a3}) is only asymptotically compact. In
this sense the system is similar to damped hyperbolic systems. We
also remark that this type of inviscid regularization has been
recently used for the two-dimensional surface quasi-geostropic model
\cite{BT}. In particular, necessary and sufficient conditions for
the formation of singularity were presented in terms of regularizing
parameter.

\section{Preliminary}

\noindent In this paper we will be using the following standard
notations in the mathematical theory of NS equations:

\begin{itemize}

\item $L^p(\Om), 1\leq p \leq\infty$, and $H^s(\Om)$ are the usual Lebesgue and Sobolev spaces, respectively.

\item For $v =(v_1,v_2,v_3),$ and $ u=(u_1,u_2,u_3)$ we denote by
$$(u,v)=\sum\limits_{j=1}^3(v_j,u_j)_{L^2(\Om)}, \
\|v\|^2=\sum\limits_{j=1}^3\|v_i\|_{L^2(\Om)}^2, \ \|\Nx v\|^2:=
\sum\limits_{j,i=1}^3\|\di v_j\|^2_{L^2(\Om)}.$$

\item We set
$$
\mathcal{V}:=\left\{v \in (C_0^{\infty}(\Om))^3: \ \Nx\cdot
v=0\right\}.
$$

\item
$H$  is the closure  of the set   $\mathcal{V}$ in $(L_2(\Omega))^3$
topology.

\item P is the Helmholz-Leray orthogonal projection in
$(L^2(\Omega))^3$ onto the space $H$, and $h:=Pf.$
\item $A:=-P\Dx$ is the Stokes operator subject to the no-slip homogeneous
Dirichlet boundary condition with the domain $(H^2(\Om))^3\cap V$.
The operator $A$ is a self-adjoint positively definite operator in
$H$, whose inverse $A^{-1}$ is a compact operator from $H$ into $H$.
Thus it has an orthonormal system of eigenfunctions
$\{w_j\}_{j=1}^{\infty}$  of $A.$

\item We denote by  $\{\la_j\}_{j=1}^{\infty}, 0<\la_1 \leq \la_2\leq \cdots,$
the eigenvalues of the Stokes operator $A$ corresponding to
eigenfunctions $\{w_j\}_{j=1}^{\infty}$, repeated according to their
multiplicities.

\item $V_s:=D(A^{s/2}), \|v\|_s:=\|A^{s/2}v\|, s \in \R. \  V:=V_1= (H_0^1(\Om))^3\cap H$
 is the Hilbert space with
the norm $\|v\|_1=\|u\|_V=\|\Nx u\|$, thanks to the Poincar\'{e}
inequality  \eqref{Poi}. Clearly $V_0=H$.
\item  For $u,v,w \in \mathcal{V}$ we define the following bilinear
form
$$
B(u,v):=P\left((u\cdot\Nx)v\right)\ \mbox{and the trilinear form} \
b(u,v,w)=(B(u,v),w).
$$

\noindent The bilinear form $B(\cdot,\cdot)$ can be extended as a
continuous operator $B: V\times V \rightarrow V'$, where $V'$ is the
dual of $V$ (see, e.g., \cite{CF}).

\item  For each $u,v,w \in V$
\begin{equation}\label{bfc}
b(u,v,v)=0,  \ \mbox{and} \ b(u,v,w)= -b(u,w,v).
\end{equation}
\end{itemize}

Next we formulate some well known inequalities and a Gronwall type
lemma that we will be using in what follows.\\

\noindent{\it Young's inequality}
\begin{equation}\label{You}
ab\leq \frac{\eb}p a^p+\frac1{q\eb^{1/(p-1)}}b^q, \ \mbox{for all} \
a,b,\eb>0, \ \mbox{with} \ q=p/(p-1), 1<p<\infty.
\end{equation}

\noindent{\it Poincar\'{e} inequality}

\begin{equation}\label{Poi}
\|u\|\leq \la_1^{-1/2}\|u\|_1, \ \ \forall u \in V,
\end{equation}
where $\la_1$ is the first eigenvalue of the Stokes operator under
the homogeneous Dirichlet boundary condition.

\noindent Hereafter, $C$ will denote a dimensionless scale invariant
constant which might depend on the shape of the domain $\Om$.\\

\noindent {\it Ladyzhenskaya inequalities}
(\cite{CF},\cite{La1},\cite{LSU})

\begin{equation}\label{int3}
\|u\|_{L^3}\leq C\|u\|^{1/2}\|\Nx u\|^{1/2}, \  \forall u \in V,
\end{equation}

\begin{equation}\label{Lad4}
\|u\|_{L^4}\leq C\|u\|^{1/4}\|u\|_1^{3/4},\  \forall u \in V.
\end{equation}

\noindent {\it Sobolev inequality} (see, e.g., \cite{Ad})

\begin{equation}\label{Sob}
\|u\|_{L^6}\leq C\|u\|_1, \  \forall u \in V.
\end{equation}

\noindent{\it Gagliardo - Nirenberg inequalities} (see, e.g.,
\cite{BV},\cite{CF},\cite{LSU})

\begin{equation}\label{GN1}
\|u\|_{L^{6/(3-2\eb)}} \leq C\|u\|^{1-\eb}\|u\|_{1}^{\eb}, \ \ 0\leq
\eb\leq 1 , \ \ \forall u \in V.
\end{equation}

\begin{equation}\label{GN}
\|u\|_{L^p} \leq C\|u\|^{2/p}\|u\|_{3/2}^{1-2/p}, \ \ p \in [2,
\infty), \ \ \forall u \in V_{3/2}.
\end{equation}

 \noindent {\it Agmon inequality} (see, e.g., \cite{CF})

\begin{equation}\label{Agm}
\|u\|_{L^{\infty}(\Om)}\leq C \|u\|_1^{1/2}\|A u\|^{1/2}, \ \forall
u \in V_2.
\end{equation}

\noindent We will use also the following estimates of   the
trilinear form $b(u,v,w)$ which follow from \eqref{int3} -
\eqref{Agm} (see, e.g., \cite{CF}).

\begin{equation}\label{bf4}
|b(u,v,w)|\leq C\|u\|^{1/2}\|u\|_1^{1/2}\|v\|_1\|w\|_1, \ \ \forall
u,v,w \in V,
\end{equation}

\begin{equation}\label{bf1}
|b(u,v,u)|\leq C\|u\|^{1/2}\|u\|_1^{3/2}\|v\|_1, \ \ \forall u,v \in
V,
\end{equation}

\begin{equation}\label{bf1ad}
|b(u,v,w)|\leq C \|u\|_1\|v\|_1\|w\|^{1/2}\|w\|_1^{1/2}, \ \ \forall
u,v,w \in V,
\end{equation}

\begin{equation}\label{bf2}
|b(u,v,w)|\leq C\la_1^{1/4}\|u\|_1\|v\|_1\|w\|_1, \ \ \forall u,v,w
\in V.
\end{equation}

\begin{lemma}\label{le1} (\cite{JT}, see also \cite{FMRT})\ Let $a(t)$ and $b(t)$ be
locally integrable functions on $(0,\infty)$ which satisfy for some
$T>0$ the conditions
$$
\liminf_{t\rightarrow \infty}\frac
1T\int_{t}^{t+T}a(\tau)d\tau=\gamma,\ \limsup_{t\rightarrow
\infty}\frac 1T\int_{t}^{t+T}a^{-}(\tau)d\tau=\Gamma,\
\liminf_{t\rightarrow \infty}\frac
1T\int_{t}^{t+T}b^{+}(\tau)d\tau=0,
$$
where $\gamma >0, \Gamma < \infty,$ $a^{-}=\max\{-a,0\}$ and
$b^{+}=\max\{b,0\}.$ If a non-negative, absolutely continuous
function $\phi(t)$, satisfies
$$
\phi'(t)+a(t)\phi(t)\leq b(t), \ t \in (0,\infty),
$$
then $\phi(t)\rightarrow 0 $ as $t\rightarrow \infty$.\\
\end{lemma}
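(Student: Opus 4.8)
The plan is to integrate the differential inequality against the integrating factor $E(t):=\exp\(\int_{t_0}^{t}a(s)\,ds\)$, where $t_0$ is a large time to be fixed later, and then to exploit the three averaged hypotheses window by window over intervals of length $T$. Since $\phi\ge 0$ and $b\le b^{+}$, the inequality $\phi'+a\phi\le b$ gives $\frac{d}{dt}\(\phi(t)E(t)\)\le b^{+}(t)E(t)$; integrating this from $s$ to $t$ for $t_0\le s\le t$ and dividing by $E(t)$ yields the representation
\begin{equation*}
\phi(t)\le \phi(s)\exp\(-\int_{s}^{t}a(\tau)\,d\tau\)+\int_{s}^{t}b^{+}(\tau)\exp\(-\int_{\tau}^{t}a(\sigma)\,d\sigma\)\,d\tau .
\end{equation*}
Everything reduces to controlling the two exponential weights and the forcing integral by means of the hypotheses on $a$, $a^{-}$ and $b^{+}$.

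Next I would fix $\eb>0$ with $\gamma-\eb>0$ and set $\mu:=\gamma-\eb$, $M:=\Gamma+\eb$. By the first two hypotheses there is $t_0$ such that $\int_{t}^{t+T}a\ge \mu T$ and $\int_{t}^{t+T}a^{-}\le MT$ for every $t\ge t_0$. Summing the lower bound over consecutive windows gives, at the grid points, $\exp\(-\int_{t_0+kT}^{t_0+nT}a\)\le e^{-(n-k)T\mu}$, while for a general $\tau$ in the $k$-th window the bound on $a^{-}$ controls the ``partial'' window through $\exp\(-\int_{\tau}^{t_0+(k+1)T}a\)\le \exp\(\int_{\tau}^{t_0+(k+1)T}a^{-}\)\le e^{TM}$, so that $\exp\(-\int_{\tau}^{t_0+nT}a\)\le e^{TM}e^{-(n-1-k)T\mu}$. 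Inserting these into the representation with $s=t_0$ and $t=t_0+nT$, and writing $\beta_k:=\int_{t_0+kT}^{t_0+(k+1)T}b^{+}$, I obtain the discrete inequality
\begin{equation*}
\phi(t_0+nT)\le \phi(t_0)\,e^{-nT\mu}+e^{TM}\sum_{k=0}^{n-1}e^{-(n-1-k)T\mu}\beta_k .
\end{equation*}

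The forcing sum is the heart of the matter, and this is exactly where the smallness of the $b^{+}$-average enters. Given $\eta>0$, I would first fix $t_0$ so large that $\beta_k\le \eta T$ for every window beyond $t_0$; then $\sum_{k=0}^{n-1}e^{-(n-1-k)T\mu}\beta_k\le \eta T\sum_{j\ge0}e^{-jT\mu}=\eta T/(1-e^{-T\mu})$, while the homogeneous term $\phi(t_0)e^{-nT\mu}\to0$ as $n\to\infty$ for this fixed $t_0$. Hence $\limsup_{n}\phi(t_0+nT)\le e^{TM}\eta T/(1-e^{-T\mu})$, and letting $\eta\to0$ forces $\phi(t_0+nT)\to0$ along the grid. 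Finally I would upgrade this to $\phi(t)\to0$ for all $t$: for $t$ in the $n$-th window the representation from $s=t_0+nT$ together with the weight bound $e^{TM}$ gives $\phi(t)\le e^{TM}\(\phi(t_0+nT)+\beta_n\)$, and both terms tend to $0$. The points that require care are the uniform-in-$\tau$ control of the exponential weight inside a single window, which is precisely what $\Gamma<\infty$ buys, and the order of the two limits $n\to\infty$ and $\eta\to0$ in estimating the convolution; once these are arranged the conclusion follows.
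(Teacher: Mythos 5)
Your proof is correct and is essentially the classical argument from the cited sources (Jones--Titi \cite{JT}, see also \cite{FMRT}), which this paper states without reproving: the integrating-factor representation, the window-by-window lower bound $\int a\ge\mu T$, the $e^{T M}$ control of the partial window via $a^{-}$ (this is exactly what $\Gamma<\infty$ is for), and the geometric summation of the $b^{+}$ windows. One caveat worth noting: your step ``fix $t_0$ so that $\beta_k\le \eta T$ for \emph{every} later window'' uses $\lim_{t\rightarrow\infty}\frac1T\int_t^{t+T}b^{+}\,d\tau=0$, which is indeed the hypothesis in \cite{JT} and \cite{FMRT}; the $\liminf$ appearing in the paper's statement is evidently a misprint, since with only $\liminf=0$ the conclusion is false (take $a\equiv 1$ and $b$ a sequence of increasingly sparse unit-mass spikes, so that $\phi$ is repeatedly kicked up to size one while the window averages of $b^{+}$ still have $\liminf$ zero), so your reading is the correct one.
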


\begin{definition}\label{ak}(see, e.g., \cite{FMRT}, \cite{He},\cite{La2}) A
semigroup $S(t): V \rightarrow V, t\geq 0$ is called {\it
asymptotically compact}, if for any sequence of positive numbers
$t_n\rightarrow \infty$ and any bounded sequence $\{v_n\}\subset V$
the sequence $\{S(t_n)v_n\}$ is precompact in $V$.
\end{definition}
\begin{theorem}\label{AK} (see, e.g., \cite{He},\cite{La2},\cite{Tem}) Assume that a semigroup
 $S(t): V \rightarrow V,$ for $t\geq
t_0>0$ can be decomposed into the form
$$
S(t)= Y(t) + Z(t),
$$
where $Z(t)$ is a compact operator in $V$ for each $t\geq t_0>0$.
Assume also that there is a continuous function $k:
[t_0,\infty)\times \R^+ \rightarrow \R^+$ such that for every $R>0$
$k(t,R)\rightarrow 0$  as $t\rightarrow \infty$ and
$$
\|Y(t)v \|_V\leq k(t,R), \ \ \ \mbox{for all} \ \ t\geq t_0>0, \ \ \
\mbox{and all} \ \  \|v\|_V\leq R.
$$

\noindent Then $S(t): V \rightarrow V, t\geq 0$ is asymptotically
compact.
\end{theorem}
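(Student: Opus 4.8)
The plan is to verify Definition~\ref{ak} directly: fix an arbitrary sequence $t_n\to\infty$ and a bounded sequence $\{v_n\}\subset V$ with $\|v_n\|_V\le R$, and show that $\{S(t_n)v_n\}$ has a subsequence converging in $V$. The naive attempt — writing $S(t_n)v_n=Y(t_n)v_n+Z(t_n)v_n$ — already disposes of the $Y$-part, since $\|Y(t_n)v_n\|_V\le k(t_n,R)\to 0$; but it does not control the $Z$-part, because $Z(t_n)$ is a \emph{different} compact operator for each $n$, and a diagonal of a family of compact operators need not be precompact. The device that repairs this is the semigroup property: for any fixed $\tau\ge t_0$ and all $n$ large enough that $t_n\ge\tau$, factor
\begin{equation*}
S(t_n)v_n=S(\tau)\,w_n^\tau,\qquad w_n^\tau:=S(t_n-\tau)v_n,
\end{equation*}
and then split the \emph{fixed-time} operator $S(\tau)=Y(\tau)+Z(\tau)$, so that a single compact operator $Z(\tau)$ is applied to the whole sequence $\{w_n^\tau\}_n$.

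For this to be useful one must know that $\{w_n^\tau\}_n$ is bounded in $V$, uniformly in $n$; this is the crux of the argument and is where the dissipativity of the system enters, namely the existence of a bounded absorbing set coming from the a priori energy estimates available in the present setting. Concretely, there are $\rho>0$ and $t_*=t_*(R)$ with $\|S(s)v_n\|_V\le\rho$ for all $s\ge t_*$ and all $n$; since $t_n-\tau\to\infty$, this yields $\|w_n^\tau\|_V\le\rho$ for all large $n$. Two consequences follow. First, $\|Y(\tau)w_n^\tau\|_V\le k(\tau,\rho)$ for all large $n$, which is as small as we please by taking $\tau$ large, using $k(\tau,\rho)\to0$. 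Second, since $\{w_n^\tau\}_n$ is bounded and $Z(\tau)$ is compact, the sequence $\{Z(\tau)w_n^\tau\}_n$ is precompact in $V$.

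The proof is then completed by a diagonal extraction. Choosing $\tau_m\to\infty$ (say $\tau_m=t_0+m$), one extracts nested subsequences along which $\{Z(\tau_m)w_n^{\tau_m}\}_n$ converges for each fixed $m$, and passes to the diagonal subsequence $\{S(t_{n_j})v_{n_j}\}_j$. To see this sequence is Cauchy, fix $\eb>0$, choose $m$ with $k(\tau_m,\rho)<\eb/4$, and for large indices $i,j$ write
\begin{equation*}
\|S(t_{n_i})v_{n_i}-S(t_{n_j})v_{n_j}\|_V\le\|Y(\tau_m)w_{n_i}^{\tau_m}\|_V+\|Y(\tau_m)w_{n_j}^{\tau_m}\|_V+\|Z(\tau_m)(w_{n_i}^{\tau_m}-w_{n_j}^{\tau_m})\|_V,
\end{equation*}
where the first two terms are each $<\eb/4$ by the decay of $Y$ and the last is $<\eb/2$ for $i,j$ large, by convergence of $\{Z(\tau_m)w_n^{\tau_m}\}_n$ along the diagonal. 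As $V$ is complete, this Cauchy diagonal sequence converges, which gives the desired precompactness. The main obstacle, as indicated, is exactly the varying-index issue in $Z(t_n)$; the semigroup factorization together with the uniform orbit bound $\rho$ is what overcomes it, after which the decay of $Y$ and the completeness of $V$ close the argument routinely.
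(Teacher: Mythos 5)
The paper itself offers no proof of Theorem \ref{AK}: it is quoted from the literature (\cite{He}, \cite{La2}, \cite{Tem}), so there is no in-paper argument to match yours against; the comparison has to be with the standard proofs. Against that benchmark your proof is correct, and it is the right repair for the hypothesis as literally stated here. Temam's version of this result assumes $Z$ is \emph{uniformly} compact for large time (i.e. $\bigcup_{t\geq t_1}Z(t)B$ is relatively compact for each bounded $B$), under which the naive split $S(t_n)v_n=Y(t_n)v_n+Z(t_n)v_n$ already suffices; Hale and Ladyzhenskaya instead run a measure-of-noncompactness argument, $\alpha(S(t)B)\leq\alpha(Y(t)B)+\alpha(Z(t)B)\leq 2k(t,R)\rightarrow 0$, again combined with boundedness of orbits. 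With only pointwise compactness of each $Z(t)$, your device --- factoring $S(t_n)=S(\tau)S(t_n-\tau)$ so that a single compact map $Z(\tau)$ acts on the whole tail sequence, then a Cantor diagonal over $\tau_m\rightarrow\infty$ --- is exactly what is needed, and your Cauchy estimate closes correctly.

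Two remarks. First, you were right, and not merely cautious, to import the absorbing-ball bound $\rho$: the theorem is \emph{false} as literally stated without some boundedness of orbits. For instance, with $P$ a rank-one orthogonal projection, $S(t):=e^{t}P+e^{-t}(I-P)$ is a semigroup satisfying all the stated hypotheses with $Y(t)=e^{-t}(I-P)$ (so $k(t,R)=e^{-t}R$) and $Z(t)=e^{t}P$ compact, yet it is not asymptotically compact in the sense of Definition \ref{ak}. In the paper this gap is harmless because Theorem \ref{AK} is invoked only in Proposition \ref{prop1}, after estimate $(E_2)$ and the absorbing ball \eqref{ball1} have supplied precisely the uniform bound $\|S(s)v_n\|_V\leq\rho$ you use; your proof makes the implicit hypothesis explicit. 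Second, a notational nit: in the application $Z(\tau)$ is a nonlinear map (the solution operator of \eqref{a73}, which depends on the full trajectory $v$), so the last term in your Cauchy display should read $\|Z(\tau_m)w_{n_i}^{\tau_m}-Z(\tau_m)w_{n_j}^{\tau_m}\|_V$ rather than $\|Z(\tau_m)(w_{n_i}^{\tau_m}-w_{n_j}^{\tau_m})\|_V$; since your argument uses only that the sequence $\{Z(\tau_m)w_{n_j}^{\tau_m}\}_j$ converges, nothing else changes.
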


\noindent Next we state a result from \cite{La2} which will enable
us to estimate the dimension of the global attractor for the system
(\ref{a1})-(\ref{a3}). This result is typically useful in the
context of nonlinear damped hyperbolic systems, when the damping
term is not strong enough to control the instabilities rising from
the perturbed nonlinearity.

\begin{theorem}\label{frdim} (see \cite{CFNT}, \cite{La2})
Let $S(t),t\in\R^+$, be a semigroup generated by the problem
$$
v_t(t)=\Phi(v(t)), \ \ v\big|_{t=0}=v_0,
$$
in the phase space $H$ and let $\mathcal{M}\subset H$ is a compact
invariant subset with respect to $S(t)$. Let $S(t)$ and
$\Phi(\cdot)$ be uniformly differentiable on $\mathcal{M}$ and let
$L(t,v_0)$ be a differential of $\Phi$ at the point $S(t)v_0, v_0\in
\mathcal{M}$. Suppose that $L^c(t,v_0):=L(t,v_0)+L^*(t,v_0), v_0\in
\mathcal{M}$ satisfies the inequality
\begin{equation}
(L^c(t)u,u)\leq -h_0(t)\|u\|^2+\sum_{k=1}^m h_{s_k}(t)\|u\|_{s_k}^2,
\end{equation}
for some numbers $ s_k <0, (k=1,...,m)$ and some functions
$h_0,h_{s_k}\in L_{1,loc}(\R), h_{s_k}(t)\geq 0, h_0(t)\geq 0
\ \mbox{for all} \ t \in \R^+.$\\
\noindent Then
$$
\mbox{dim}_{\mathcal{H}}(\mathcal{M})\leq
\mbox{dim}_{f}(\mathcal{M})\leq N,
$$
where $N$ is such that
$$
-\bar{h}_0(T)+\sum_{k=0}^m\bar{h}_{s_k}(T)N^{s_k}<0,
$$
for some $T>0$. Here $\bar{h}_i(T):=\frac1T\int_0^Th_i(\tau)d\tau.$
\end{theorem}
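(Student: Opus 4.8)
The plan is to prove this abstract bound by the trace (volume--contraction) method of Constantin--Foias--Temam \cite{CFNT} (see also \cite{Tem},\cite{FMRT}), which reduces the estimate of $\dim_{\mathcal H}(\mathcal{M})$ and $\dim_f(\mathcal{M})$ to showing that the time-averaged trace of the symmetrized linearization, restricted to finite-dimensional subspaces of the tangent space, becomes negative. For a point $v_0\in\mathcal{M}$ I would consider the first variation equation $\dot U=L(t,v_0)U$ obtained by differentiating the flow along the trajectory $S(t)v_0$; uniform differentiability of $S(t)$ on $\mathcal{M}$ guarantees that $U(t)=(D_{v_0}S(t))\xi$ solves this linear problem for each initial displacement $\xi\in H$, where $L(t,v_0)$ is the Fr\'echet derivative of $\Phi$ at $S(t)v_0$.

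First I would track the evolution of volumes. For displacements $\xi_1,\dots,\xi_n\in H$ with corresponding solutions $U_1(t),\dots,U_n(t)$, the infinitesimal $n$-dimensional volume obeys the Liouville-type identity
\begin{equation*}
\|U_1(t)\wedge\cdots\wedge U_n(t)\|_{\wedge^n H}=\|\xi_1\wedge\cdots\wedge\xi_n\|_{\wedge^n H}\,\exp\!\left(\int_0^t \mbox{Tr}\bigl(L(\tau,v_0)\circ Q_n(\tau)\bigr)\,d\tau\right),
\end{equation*}
where $Q_n(\tau)$ is the orthogonal projection in $H$ onto $\mbox{span}\{U_1(\tau),\dots,U_n(\tau)\}$; thus $n$-dimensional volumes decay uniformly along $\mathcal{M}$ as soon as the averaged trace is negative. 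Taking an orthonormal basis $\{\phi_j(\tau)\}_{j=1}^n$ of the range of $Q_n(\tau)$ and using that the real inner product gives $(L\phi_j,\phi_j)=(L^*\phi_j,\phi_j)$, one has
\begin{equation*}
\mbox{Tr}\bigl(L(\tau)\circ Q_n(\tau)\bigr)=\sum_{j=1}^n (L(\tau)\phi_j,\phi_j)=\frac12\sum_{j=1}^n (L^c(\tau)\phi_j,\phi_j)\le \frac12\left(-n\,h_0(\tau)+\sum_{k=1}^m h_{s_k}(\tau)\sum_{j=1}^n\|\phi_j\|_{s_k}^2\right),
\end{equation*}
where the last step uses the structural hypothesis together with $\|\phi_j\|=1$.

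Next I would insert the spectral estimate. Since each $s_k<0$, the operator $A^{s_k}$ is positive, self-adjoint and compact with eigenvalues $\lambda_j^{s_k}$ in decreasing order, so the Ky Fan (min--max) principle bounds the partial trace over any orthonormal family by the top eigenvalues,
\begin{equation*}
\sum_{j=1}^n\|\phi_j\|_{s_k}^2=\sum_{j=1}^n (A^{s_k}\phi_j,\phi_j)\le \sum_{j=1}^n \lambda_j^{s_k}.
\end{equation*}
Averaging over $[0,T]$ and dividing by $n$, the averaged trace is strictly negative whenever $-\bar h_0(T)+\sum_{k}\bar h_{s_k}(T)\bigl(\tfrac1n\sum_{j=1}^n\lambda_j^{s_k}\bigr)<0$; bounding $\tfrac1n\sum_{j=1}^n\lambda_j^{s_k}$ via the eigenvalue distribution of $A$ by the decreasing quantity $N^{s_k}$ of the statement yields exactly the condition $-\bar h_0(T)+\sum_k\bar h_{s_k}(T)N^{s_k}<0$, and since this expression tends to $-\bar h_0(T)<0$ as $N\to\infty$, such an $N$ exists. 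Because the resulting bound on the averaged trace is a \emph{concave} function of the subspace dimension (the increments $\lambda_{n+1}^{s_k}$ decrease), the Constantin--Foias--Temam dimension theorem then delivers $\dim_{\mathcal H}(\mathcal{M})\le\dim_f(\mathcal{M})\le N$.

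The hard part will be making the infinite-dimensional apparatus rigorous rather than the algebra above. One must justify the Liouville/trace formula for the (generally non-self-adjoint, only densely defined) linearization $L(t,v_0)$ on $\mathcal{M}$, and, more delicately, pass from the contraction of $n$-dimensional volumes---which directly controls $\dim_{\mathcal H}$---to the bound on $\dim_f$. This passage relies on the concavity and superadditivity properties of the uniform Lyapunov exponent sums $q_n$ developed in \cite{CFNT} and \cite{La2}; negativity at the single index $N$ suffices only because concavity forces $q_n<0$ for all $n\ge N$, which is what legitimizes reading off the dimension from the inequality in the statement.
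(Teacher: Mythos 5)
The first thing to say is that the paper contains no proof of this theorem at all: it is imported verbatim, with attribution ``(see \cite{CFNT}, \cite{La2})'', as a black-box tool to be applied in Section 5. So there is no internal argument to compare yours against, and your sketch must be judged on its own merits. Methodologically you are in the right place: the proofs in \cite{CFNT} and \cite{La2} are indeed of the volume-contraction/uniform-Lyapunov-exponent type, and your first two stages are sound --- the Liouville identity for $n$-volumes, the symmetrization $(L\phi_j,\phi_j)=\frac12(L^c\phi_j,\phi_j)$ in a real Hilbert space, and the Ky Fan bound $\sum_{j=1}^n\|\phi_j\|_{s_k}^2=\sum_{j=1}^n(A^{s_k}\phi_j,\phi_j)\le\sum_{j=1}^n\lambda_j^{s_k}$ for an orthonormal family are all correct, as is your closing remark that passing from Hausdorff to fractal dimension at the single index $N$ requires the concavity/superadditivity machinery for the exponent sums $q_n$.

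The genuine gap is the step where you claim that ``bounding $\tfrac1n\sum_{j=1}^n\lambda_j^{s_k}$ via the eigenvalue distribution of $A$ by $N^{s_k}$ yields exactly the condition'' of the statement. First, the theorem as stated is abstract: no operator $A$ and no spectral growth hypothesis appear in it, so nothing in the hypotheses entitles you to any eigenvalue distribution; you have silently imported the Stokes operator from the surrounding paper. Second, the comparison goes the wrong way in general: since $s_k<0$ the sequence $\lambda_j^{s_k}$ is decreasing, hence $\tfrac1n\sum_{j=1}^n\lambda_j^{s_k}\ge\lambda_n^{s_k}$, and it is of order $n^{s_k}$ only when $\lambda_j$ grows at least linearly in $j$ (and even then with a constant that ought to surface in the criterion). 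Third, and most concretely, linear growth fails in the very setting the paper applies the theorem to: for the 3D Stokes operator the Weyl asymptotics give $\lambda_j\sim c\lambda_1 j^{2/3}$, so your chain of inequalities produces the criterion with $N^{2s_k/3}$ in place of $N^{s_k}$ --- a strictly weaker conclusion (for instance, the dimension bound \eqref{da9} obtained from it would acquire an exponent $3/2$). So as written your argument proves a different theorem. To repair it within the trace method you must either add the hypothesis $\lambda_j\ge c\lambda_1 j$, or state the conclusion with $\tfrac1N\sum_{j\le N}\lambda_j^{s_k}$ in place of $N^{s_k}$; alternatively, Ladyzhenskaya's route in \cite{La2} avoids traces altogether by contracting on the complement of the first $N$ modes, using $\|Q_Nu\|_{s_k}^2\le\lambda_{N+1}^{s_k}\|Q_Nu\|^2$, which yields the criterion with $\lambda_{N+1}^{s_k}$ rather than $N^{s_k}$. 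A minor further point: your existence claim for $N$ needs $\bar h_0(T)>0$ for some $T$, which the stated hypotheses ($h_0\ge0$) do not by themselves guarantee.
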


 \section{ Existence of Global Attractors}

{\noindent}Applying the Helmholtz - Leray projector $P$ to the
system (\ref{a1})-(\ref{a2}), we obtain the following  equivalent
functional differential equation
\begin{equation}\label{a4}
v_t +\nu Av+\al^2 Av_t+B(v,v)=h,\ \ \  h=Pf,
\end{equation}
\begin{equation}\label{a5}
v(0)=v_0.
\end{equation}

\noindent The question of global existence and uniqueness of
\eqref{a4}-\eqref{a5} first was studied  in \cite{Osk1}, where
actually  it was established that the problem (\ref{a1})-(\ref{a3})
generates a continuous semigroup $S(t): V\rightarrow V, t\in \R^+$.
In \cite{CLT} the authors proved also the global regularity for
inviscid model
of \eqref{a4}, i.e. when $\nu=0.$\\

In this section we show that the semigroup $S(t)$ generated by the
problem (\ref{a1})-(\ref{a3}) has an absorbing ball in $V$ and an
absorbing ball in $V_2$. Then we show that $S(t): V\rightarrow V,$
for $t\in \R^+$ is an asymptotically compact semigroup,
and deduce the existence of a global attractor in $V$.\\

Let us note that the formal estimates  we provide below
 can be justified rigorously by using a Galerkin approximation
procedure and passing to the limit, by using the relevant Aubin's
compactness theorem as for the NS
equations ( see, for example,
\cite{CF},\cite{FMRT},  \cite{Ro} or \cite{Tem}).\\

\noindent {\bf Absorbing ball in $V$.} Taking the inner product of
(\ref{a4}) with $v$, and noting that due to \eqref{bfc}
$(B(v,v),v)=0$, we get
\begin{equation}\label{a6}
\frac{d}{dt}\left[\|v(t)\|^2+ \al^2\|v(t)\|_1^2\right]+2\nu
\|v(t)\|_1^2\leq 2\|h\|_{-1}\|v(t)\|_1.
\end{equation}

\noindent It is easy to see by Poincar\'{e} inequality \eqref{Poi}
that
$$
\nu  \|v(t)\|_1^2\geq \frac{\nu}2\left[\la_1\|v\|^2+
\|v(t)\|_1^2\right] \geq d_0\left[\|v(t)\|^2+\al^2
\|v(t)\|_1^2\right],
$$
where $d_0:=\frac{\nu}2 \min \{\frac 1{\al^2},\la_1\}=\nu d_1$.
Hence (\ref{a6}) implies
$$
\frac{d}{dt}\left[\|v(t)\|^2+\al^2
\|v(t)\|_1^2\right]+d_0\left[\|v(t)\|^2+\al^2
\|v(t)\|_1^2\right]\leq \frac1{\nu}\|h\|_{-1}^2.
$$
By Gronwall's inequality we have
$$
\|v(t)\|^2+\al^2\|v(t)\|_1^2\leq
$$
$$
e^{-d_0(t-s)}\left[\|v(s)\|^2+\al^2\|v(s)\|_1^2-\frac{\|h\|^2_{-1}}{\nu
d_0}\right]+\frac1{\nu d_0}\|h\|_{-1}^2. \eqno(E_2)
$$
Therefore,
$$
\limsup_{t\rightarrow \infty}\left[\|v(t)\|^2+\al^2
\|v(t)\|_1^2\right]\leq \frac{\|h\|^2_{-1}}{\nu d_0}.\eqno(E_1)
$$

\noindent The last inequality implies that the semigroup $ S(t):
V\rightarrow V, t\in \R^+$ generated by the problem
(\ref{a1})-(\ref{a3}) (or equivalently \eqref{a4}-\eqref{a5}) has
an absorbing ball

\begin{equation}\label{ball1}
\mathcal{B}_1:=\left\{v\in V: \|v\|_1\leq \frac{2}{\sqrt{\nu \al^2
d_0}}\|h\|_{-1}\right\}.
\end{equation}

\noindent Hence, the following uniform estimate is valid
\begin{equation}\label{a7}
\| v(t)\|_1\leq M_1,
\end{equation}
where $ M_1=\frac{2}{\nu\al\sqrt{
d_1}}\|h\|_{-1}$, for $t$ large enough ( $t\gg 1$)
depending on the initial data.\\

\noindent {\bf Asymptotic compactness.} By using the Galerkin
procedure it is not difficult to prove the following

\begin{proposition}\label{le2} Let $s\in \R$. If $w_0\in V_s, g \in L^2([0,T); V_{s-2})$ then the linear problem
\begin{equation}\label{a71}
z_t+\al^2 A z_t+\nu Az= g(t) ,\ z(0)= 0
\end{equation}
has a unique weak solution which belongs to $C([0,T);V_s)$
 and the following inequality holds
$$
 \sup\limits_{t\in[0,T)}\|z(t)\|_s\leq C \|g\|_{L^{2}(0,T;V_{s-2})}, \  s
 \in \R.
$$
\end{proposition}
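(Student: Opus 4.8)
The plan is to establish existence, uniqueness, and the energy estimate for the linear problem \eqref{a71} via the standard Galerkin approximation scheme, exploiting the fact that the operator $(I+\al^2 A)$ is invertible and positive. First I would set up the Galerkin ansatz $z_m(t)=\sum_{j=1}^m c_j(t)w_j$, where $\{w_j\}$ are the eigenfunctions of the Stokes operator $A$, and project \eqref{a71} onto $P_m:=\mbox{span}\{w_1,\dots,w_m\}$. Since the $w_j$ are eigenfunctions of $A$, the operator $(I+\al^2 A)$ acts diagonally, so the projected system is a linear ODE system for the coefficients $c_j(t)$ with a bounded, invertible coefficient matrix in front of $\dot c_j$; standard ODE theory then yields a unique solution $z_m\in C^1([0,T);P_m)$ with $z_m(0)=0$.

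The heart of the argument is the \emph{a priori} estimate, which must be derived at the correct regularity level $s$. I would take the inner product of the Galerkin equation with $A^s z_m$, which gives
$$
\frac{1}{2}\frac{d}{dt}\left[\|z_m\|_s^2+\al^2\|z_m\|_{s+1}^2\right]+\nu\|z_m\|_{s+1}^2=(g,A^s z_m).
$$
The pairing on the right I would bound by duality between $V_{s-2}$ and $V_{s+2}$ together with Cauchy--Schwarz, writing $|(g,A^s z_m)|=|\langle g, A^s z_m\rangle|\le \|g\|_{s-2}\,\|A^s z_m\|_{2-s}=\|g\|_{s-2}\|z_m\|_{s+2}$; however, to avoid needing control of $\|z_m\|_{s+2}$, the cleaner route is to pair instead in a way that matches $g\in V_{s-2}$ with the natural energy, namely to test against $A^{s-1}z_m$ so that the right-hand side becomes $\langle g,A^{s-1}z_m\rangle\le \|g\|_{s-2}\|z_m\|_s$. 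Then Young's inequality \eqref{You} absorbs $\|z_m\|_s$ and, after using the Poincar\'e inequality \eqref{Poi} to dominate $\|z_m\|_{s-1}$ by $\|z_m\|_s$, integrating in time from $0$ to $t$ with $z_m(0)=0$ yields a bound of the form
$$
\sup_{t\in[0,T)}\left[\|z_m(t)\|_s^2+\al^2\|z_m(t)\|_{s+1}^2\right]\le C\,\|g\|_{L^2(0,T;V_{s-2})}^2,
$$
with $C$ independent of $m$.

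The uniform bound shows $\{z_m\}$ is bounded in $L^\infty(0,T;V_s)$ and, via the equation, $\{(z_m)_t\}$ is bounded in $L^2(0,T;V_{s})$ after inverting $(I+\al^2 A)$; passing to a subsequence and using weak-$*$ compactness together with the Aubin--Lions compactness lemma, I would extract a limit $z$ solving \eqref{a71} weakly. The linearity of the equation makes uniqueness immediate: the difference of two solutions satisfies \eqref{a71} with $g\equiv 0$ and zero initial data, and the energy identity forces it to vanish. Finally, continuity $z\in C([0,T);V_s)$ follows because $(I+\al^2 A)^{-1}$ maps into a space one derivative smoother, so the equation controls $z_t$ in $V_s$, giving the required temporal regularity.

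The step I expect to be the main obstacle is choosing the test function at exactly the right order so that the forcing term $g\in V_{s-2}$ pairs against the natural energy norm without generating an uncontrolled higher-order term; once the pairing $\langle g, A^{s-1}z\rangle\le\|g\|_{s-2}\|z\|_s$ is set up correctly, the remaining estimates are routine applications of \eqref{You} and \eqref{Poi}, and the compactness and passage-to-the-limit arguments are entirely standard for this class of linear evolution equations.
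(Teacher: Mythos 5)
Your Galerkin argument is essentially the paper's approach: the paper offers no written proof, saying only that the proposition follows ``by using the Galerkin procedure,'' and your proposal supplies exactly that, with the key point correctly identified --- testing with $A^{s-1}z_m$ so that $(g,A^{s-1}z_m)\leq\|g\|_{s-2}\|z_m\|_{s}$ pairs the forcing against the norm that the term $\al^2 Az_t$ contributes to the energy. One transcription slip to fix: after you switch test functions, the energy functional is $\|z_m\|_{s-1}^2+\al^2\|z_m\|_{s}^2$, not $\|z_m\|_{s}^2+\al^2\|z_m\|_{s+1}^2$ as in your final display; that latter bound is actually false in general (the solution gains exactly two derivatives over $g$, and the $A^{s}z_m$ pairing would require control of $\|z_m\|_{s+2}$, which the dissipation does not provide --- precisely the obstruction you yourself noted), but the corrected energy already gives $\al^2\sup_{t}\|z_m(t)\|_{s}^2\leq \nu^{-1}\|g\|_{L^2(0,T;V_{s-2})}^2$, which is the stated estimate. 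Two minor remarks: Aubin--Lions compactness is unnecessary since the equation is linear, so weak-$*$ limits suffice to pass to the limit; and the continuity $z\in C([0,T);V_s)$ follows, as you indicate, from $z_t=(I+\al^2A)^{-1}(g-\nu Az)\in L^2(0,T;V_s)$.
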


\begin{proposition}\label{prop1}  Let $h\in H$, be
time independent, then the
semigroup $S(t), t\geq 0$ is asymptotically
compact semigroup in $V$.\\
\end{proposition}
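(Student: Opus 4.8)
The plan is to apply the abstract asymptotic compactness criterion of Theorem~\ref{AK} to an additive splitting of the semigroup that isolates a decaying part from a regularizing (hence compact) part. Writing the solution of \eqref{a4}--\eqref{a5} as $v = u + w$, I would let $Y(t)v_0 := u(t)$ solve the linear homogeneous problem
\begin{equation*}
u_t + \nu A u + \al^2 A u_t = 0, \qquad u(0) = v_0,
\end{equation*}
and let $Z(t)v_0 := w(t)$ solve the inhomogeneous problem
\begin{equation*}
w_t + \nu A w + \al^2 A w_t = h - B(v,v), \qquad w(0) = 0,
\end{equation*}
so that $S(t) = Y(t) + Z(t)$. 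I would first record that, exactly as in the derivation of $(E_1)$--$(E_2)$, every trajectory issuing from a ball $\{\|v_0\|_1 \le R\}$ obeys a uniform-in-time bound $\|v(t)\|_1 \le M(R)$ for all $t\ge 0$ (cf.\ \eqref{a7}); this control of the full solution is precisely what feeds the forcing term of the $w$-equation.

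For the decaying part I would test the $u$-equation with $u$ to obtain $\frac{d}{dt}[\|u\|^2 + \al^2\|u\|_1^2] + 2\nu\|u\|_1^2 = 0$, and then reuse the Poincar\'e estimate $\nu\|u\|_1^2 \ge d_0[\|u\|^2 + \al^2\|u\|_1^2]$ already established for the absorbing ball. Gronwall's inequality then yields $\|u(t)\|^2 + \al^2\|u(t)\|_1^2 \le e^{-2d_0 t}[\|v_0\|^2 + \al^2\|v_0\|_1^2]$, so that $\|Y(t)v_0\|_V = \|u(t)\|_1 \le k(t,R)$ with $k(t,R) = \al^{-1}\sqrt{\la_1^{-1}+\al^2}\,e^{-d_0 t}R \to 0$ as $t\to\infty$, uniformly over $\|v_0\|_1\le R$. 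This is exactly the decay hypothesis demanded by Theorem~\ref{AK}, and it is the straightforward half of the argument.

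For the compact part I would apply Proposition~\ref{le2} to the $w$-equation with $g = h - B(v,v)$. Since $h\in H = V_0$, it suffices to place $B(v,v)$ in $V_{s-2}$ for some $s>1$, with a bound depending only on $\|v\|_1$; then $\sup_{[0,T)}\|w\|_s \le C\|g\|_{L^2(0,T;V_{s-2})}$ is finite and uniform over $\|v_0\|_1\le R$, and because the embedding $V_s \hookrightarrow V$ is compact for $s>1$, the operator $Z(t)$ will map bounded subsets of $V$ into precompact subsets of $V$. I would take $s = 3/2$, for which the estimate required is $\|B(v,v)\|_{V_{-1/2}} \le C\|v\|_1^2$. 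This is the crux of the proof: the listed bounds \eqref{bf4} and \eqref{bf1ad} only yield $B(v,v)\in V_{-1}$, which corresponds to $s=1$ and produces no compactness. To gain the missing half-derivative I would estimate, for $\phi \in V_{1/2}$,
\begin{equation*}
|(B(v,v),\phi)| = |b(v,v,\phi)| \le \|v\|_{L^6}\,\|\Nx v\|\,\|\phi\|_{L^3} \le C\|v\|_1^2\,\|\phi\|_{1/2},
\end{equation*}
using the Sobolev embedding \eqref{Sob} together with the three-dimensional embedding $V_{1/2}\hookrightarrow L^3$, and then take the supremum over $\|\phi\|_{1/2}\le 1$.

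The hard part, and essentially the only delicate point, is exactly this sub-critical estimate: one must land $B(v,v)$ strictly inside $V_{-1}$ (here in $V_{-1/2}$) using only the $V$-bound \eqref{a7} on the trajectory, since no higher a priori regularity of $v$ is available at this stage. Once it is in hand, combining $\|v(t)\|_1\le M(R)$ with Proposition~\ref{le2} gives $\sup_{[0,T)}\|Z(t)v_0\|_{3/2} \le C(R,T)$, the compactness of $Z(t)$ for each $t\ge t_0>0$ follows from the compact embedding $V_{3/2}\hookrightarrow V$, and Theorem~\ref{AK} then yields the asymptotic compactness of $S(t)$ in $V$.
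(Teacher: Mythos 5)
Your proposal is correct and follows essentially the same route as the paper: the identical splitting $S(t)=Y(t)+Z(t)$ into the exponentially decaying linear semigroup and the inhomogeneous part driven by $h-B(v,v)$, the same key half-derivative gain $\|B(v,v)\|_{-1/2}\leq C\|v\|_1^2$ via H\"older with $\|v\|_{L^6}\|v\|_1\|\phi\|_{L^3}$ and $\|\phi\|_{L^3}\leq C\|A^{1/4}\phi\|$, and the same conclusion through Proposition~\ref{le2} with $s=3/2$, the compact embedding $V_{3/2}\subset V$, and Theorem~\ref{AK}. You have also correctly identified the sub-critical estimate \eqref{Bvv1} as the crux, which is precisely the step the paper singles out.
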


\begin{proof} Let $v_0\in V$. First we observe that  $S(t)$ has the representation
\begin{equation}\label{rep}
S(t)v_0=Y(t)v_0+Z(t)v_0,
\end{equation}
where $Y(t)$ is the semigroup, generated by the linear problem
\begin{equation}\label{a72}
y_t+\nu Ay+\al^2 A y_t=0 ,\ y(0)=v_0,
\end{equation}
and $z(t)=Z(t)(v_0)$ is the solution of the problem
\begin{equation}\label{a73}
z_t+\nu Az+\al^2 A z_t= h -B(v(t),v(t)) ,\ z(0)=0,
\end{equation}
where $v$ is the solution of (\ref{a1})-(\ref{a3}) (or
equivalently \eqref{a4}-\eqref{a5}) with the initial data $v_0$.\\
\noindent Taking the $H$ inner product of (\ref{a72}) with $y$  we
obtain
$$
\frac{d}{dt}\left[\|y(t)\|^2+\al^2
\|y(t)\|_1^2\right]+d_0\left[\|y(t)\|^2+\al^2
\|y(t)\|_1^2\right]\leq 0,
$$
where  we recall that $d_0=\nu d_1=\nu \frac{1}2\min
\{\frac1{\al^2},\la_1\}.$

\noindent This inequality implies that
\begin{equation}\label{con}
\|y(t)\|^2+\al^2 \|y(t)\|_1^2\leq e^{-d_0t}\left[\|v_0\|^2+\al^2
\|v_0\|_1^2\right], \ \mbox{for all} \  t>0.
\end{equation}
So the semigroup $Y(t):V\rightarrow V$ is exponentially
contractive.\\

\noindent Due to H\"{o}lder's inequality and the Sobolev inequality
\eqref{Sob} we have
\begin{multline*}
\|B(v,v)\|_{-1/2}=\sup_{\phi\in
V,\|A^{1/4}\phi\|=1}b(v,v,\phi)=\\\sup_{\phi\in
V,\|A^{1/4}\phi\|=1}\int_{\Om}P((v\cdot\Nx)v)\cdot\phi dx=
\sup_{\phi\in V,\|A^{1/4}\phi\|=1}\int_{\Om}(v\cdot\Nx)v\cdot
P\phi
dx= \\
\sup_{\phi\in V,\|A^{1/4}\phi\|=1}\int_{\Om}(v\cdot\Nx)v\cdot \phi
dx\leq C \sup_{\phi\in
V,\|A^{1/4}\phi\|=1}\|v\|_{L^6}\|v\|_1\|\phi\|_{L^3}.
\end{multline*}
Hence due to the Sobolev inequality $\|\phi\|_{L^3}\leq
C\|A^{1/4}\phi\|$ and \eqref{Sob} we have
\begin{equation}\label{Bvv1}
\|B(v,v)\|_{-1/2}\leq C\sup_{\phi\in
V,\|A^{1/4}\phi\|=1}\|v\|_1^2\|A^{1/4}\phi\|\leq C\|v\|_1^2,
\end{equation}
and
$$
B(v,v) \in L^{\infty}(\R^+;V_{-1/2}).
$$

\noindent The function  $v(t)$ as a solution of the problem
\eqref{a4}-\eqref{a5} with $v_0\in V$  belongs to
$L^{\infty}(\R^+;V)$. Thus due  to the inequality \eqref{Bvv1} and
the Proposition \ref{le2}, the solution of the problem (\ref{a73})
belongs to $ C(\R^+; V_{3/2})$, that is the operator $Z(t)$ maps $V$
into $V_{3/2}$. Since the embedding $V_{3/2}\subset V$ is a compact
embedding, the operator $Z(t)$ is a compact operator for each $t>0.$
Hence, the semigroup $S(t)$ satisfies the conditions of the Theorem
\ref{AK}, and is an asymptotically compact semigroup.
\end{proof}

\noindent Since each bounded dissipative and asymptotically compact
semigroup possesses a compact global attractor (see, e.g.,
\cite{BV}, \cite{He}, \cite{La1}, \cite{Tem}) we have:

\begin{theorem}\label{atr1} If $h \in H$ then the semigroup $S(t): V\rightarrow V$
has an absorbing ball $\mathcal{B}_1=\{ v \in V: \|v\|_1 \leq M_1\}$
and a global attractor $\mathcal{A}_1\subset V$. The attractor
$\mathcal{A}_1 $ is compact, connected and invariant.
\end{theorem}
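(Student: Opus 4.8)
The plan is to assemble the two structural properties already established in this section---the existence of a bounded absorbing set and asymptotic compactness---and then invoke the standard abstract theorem guaranteeing a compact global attractor for such semigroups. Since all of the genuine analytical work has already been carried out, the proof is essentially a matter of verifying that the hypotheses of that abstract result are met and reading off its conclusions.

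First I would record dissipativity. The estimate $(E_1)$, together with the continuous dependence estimate $(E_2)$, shows that the ball $\mathcal{B}_1$ defined in \eqref{ball1} is a bounded absorbing set: given any bounded set $\mathcal{B}\subset V$, there is a time $t_0=t_0(\mathcal{B})$, depending only on $\sup_{v_0\in\mathcal{B}}\|v_0\|_1$, such that $S(t)\mathcal{B}\subset\mathcal{B}_1$ for all $t\geq t_0$; this is exactly the content of the uniform bound \eqref{a7}. The same a priori estimates show that $S(t)$ maps bounded sets into bounded sets, so the semigroup is bounded. Second, Proposition \ref{prop1} provides the asymptotic compactness of $S(t):V\rightarrow V$, via the decomposition $S(t)=Y(t)+Z(t)$ in which $Y(t)$ is exponentially contractive by \eqref{con} and $Z(t)$ is compact.

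With these two properties in hand, I would invoke the classical result cited immediately before the statement: a continuous, bounded, dissipative, and asymptotically compact semigroup on the complete metric space $V$ admits a compact global attractor, given explicitly by the $\omega$-limit set of the absorbing ball,
$$
\mathcal{A}_1=\omega(\mathcal{B}_1)=\bigcap_{s\geq 0}\overline{\bigcup_{t\geq s}S(t)\mathcal{B}_1}.
$$
This set is automatically invariant under $S(t)$ and compact in $V$. For connectedness I would use that the phase space $V$ is a convex, hence connected, Hilbert space together with the continuity of $S(t)$: the absorbing ball $\mathcal{B}_1$ is convex and therefore connected, each image $S(t)\mathcal{B}_1$ is then connected, and the attractor inherits connectedness through the intersection of closures of these connected sets. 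I expect no serious obstacle in this argument, since the hard estimates---the absorbing ball and the compactness of $Z(t)$---are already established; the only care needed is to state the abstract theorem with its correct hypotheses and to invoke the connectedness of the underlying space $V$.
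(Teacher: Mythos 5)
Your proposal is correct and follows exactly the paper's route: the paper proves this theorem by combining the absorbing ball $\mathcal{B}_1$ from the energy estimates $(E_1)$--$(E_2)$ with the asymptotic compactness of Proposition \ref{prop1}, and then citing the standard abstract result that a bounded dissipative asymptotically compact semigroup possesses a compact, connected, invariant global attractor; your added details (the $\omega$-limit set construction and the connectedness argument via convexity of $\mathcal{B}_1$) are the standard content of that cited theorem. One minor slip: $(E_2)$ is the Gronwall decay estimate for the energy, not a continuous dependence estimate, but you use it correctly to get an entry time into $\mathcal{B}_1$ depending only on the size of the initial data.
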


\noindent Next we  show that the global attractor $\mathcal{A}_1$ is
a bounded subset of $V_2$.\\

\noindent Taking the inner product in $V_{1/2}$ of the equation
\eqref{a73} with $z$, and remembering that $v(t)=y(t)+z(t)\in
\mathcal{A}_1$, we get
\begin{multline}\label{dmm1}
\frac
 d{dt}\left[\|z(t)\|^2_{1/2}+\al^2\|z(t)\|^2_{3/2}\right]+
 2\nu \|z(t)\|^2_{3/2}=\\2(h,z(t))_{1/2}-2(B(v(t),v(t)),z(t))_{1/2}.
\end{multline}

\noindent The first term on the right-hand side has the estimate
$$
|2(h,z(t))_{1/2}|\leq 2\|h\|_{-1/2}\|z(t)\|_{3/2}\leq
\frac{\nu}2\|z(t)\|^2_{3/2}+\frac2{\nu}\|h\|^2_{-1/2}.
$$
The second term, due to \eqref{Bvv1}, has the following estimate
\begin{multline*}
|2(B(v(t),v(t)),z(t))_{1/2}|\leq
C\|(B(v(t),v(t))\|_{-1/2}\|z(t)\|_{3/2}\leq\\
\frac{\nu}2\|z(t)\|^2_{3/2}+ \frac
{C}{\nu}\|B(v(t),v(t))\|_{-1/2}^2\leq
\frac{\nu}2\|z(t)\|^2_{3/2}+\frac {C}{\nu}\|v\|_1^4.
\end{multline*}
Taking into account the last two inequalities in \eqref{dmm1} we
obtain
\begin{multline*}
\frac d{dt}\left[\|z(t)\|^2_{1/2}+\al^2\|z(t)\|^2_{3/2}\right]+
2d_0\left[\|z(t)\|^2_{1/2}+\al^2\|z(t)\|^2_{3/2}\right]\leq
\\
\frac{C}{\nu}\left(\|v(t)\|_1^{4}+ \|h\|^2_{-1/2}\right).
\end{multline*}
Integrating the last inequality we obtain the estimate
\begin{equation}\label{dmm2}
\|z(t)\|^2_{3/2}\leq \frac C{d_0\al^2\nu}\left(
M_1^{4}+\|h\|^2_{-1/2}\right)=L_0.
\end{equation}
Since the attractor $\mathcal{A}_1$ is invariant,
$S(t)\mathcal{A}_1=\mathcal{A}_1$, and due to \eqref{con} the
inequality
$$
\|v(t)-z(t)\|_1=\|y(t)\|_1\leq C(\|y(0)\|_1)e^{-d_0 t}
$$
holds, we deduce that for each  $u \in \mathcal{A}_1$  there exists
a sequence $\{z(t_k)\}, t_k\rightarrow \infty,$ corresponding to
$v_k(0) \in \mathcal{A}_1$, such that
\begin{equation}\label{lim}
u=\lim_{k\rightarrow \infty}z(t_k), \ v_k(0) \in \mathcal{A}_1.
\end{equation}
Thanks to \eqref{dmm2} the sequence $\{z(t_k)\}$ is belonging to a
ball in $V_{3/2}$, whose radius $L_0$ depends only on $M_1$ and
$\|h\|$. Hence, the sequence $\{z(t_k)\}$ is weakly compact in
$V_{3/2}$. Thus, by using \eqref{lim} and the inequality
$\|u\|_{3/2}\leq \lim\inf_{t_k\rightarrow \infty}\|z(t_k)\|_{3/2},$
we see
that $\mathcal{A}_1$ is bounded in $V_{3/2}$.\\
Knowing that $\mathcal{A}_1$ is bounded in $V_{3/2}$ we can use
similar arguments to show that $\mathcal{A}_1$ is also bounded in
$V_{5/3}$ and in $V_2$.\\

\noindent {\bf $V_2$ absorbing ball.} To show that the semigroup
$S(t): V_2 \rightarrow V_2$ has an absorbing ball in the phase space
$V_2=D(A)$ we take $H$ inner product of (\ref{a4}) with $Av(t)$:
\begin{equation}\label{a8}
\frac{d}{dt}\left[\|v(t)\|_1^2+\al^2 \|A v(t)\|^2\right]+2\nu \|A
v(t)\|^2 +2(B(v(t),v(t)),A v(t))=2(h,Av(t)).
\end{equation}
For the first term in the right hand side of \eqref{a8} we have
\begin{equation}\label{a9a}
|2(h,Av(t))|\leq  \frac1{\nu} \|h\|^2+ \nu \|Av(t)\|^2.
\end{equation}
By using the Agmon's inequality \eqref{Agm} and Young's inequality
\eqref{You} with $p= 4/3$ we can estimate the last term in the
left-hand side of (\ref{a8}) as follows
\begin{multline*}
2|(B(v,v),Av)|\leq C\|v\|_{L^{\infty}(\Om)}\|\|v\|_1\|\|Av\|\leq
C\|v\|_1^{3/2}\|\|Av\|^{3/2}\leq
\\
\frac34\epsilon\|Av\|^2 +\frac {C}{\epsilon^3}\|v\|_1^6.
\end{multline*}

\noindent Employing \eqref{a9a} and the  last inequality, with
$\epsilon=2\nu/3$, we obtain from (\ref{a8})
\begin{equation}\label{a9}
\frac{d}{dt}\left[\|v(t)\|_1^2+\al^2 \|A v(t)\|^2\right]+\nu \|A
v(t)\|^2\leq \frac1{\nu}\|h\|^2+ \frac{C}{\nu^3}\|v(t)\|_1^6.
\end{equation}
It follows from (\ref{a9}) that
$$
\frac{d}{dt}\left[\|v(t)\|_1^2+\al^2 \|A
v(t)\|^2\right]+d_0\left[\|v(t)\|_1^2+\al^2 \|A
v(t)\|^2\right]\leq \frac1{\nu}\|h\|^2+
\frac{C}{\nu^3}\|v(t)\|_1^6.
$$
Let $t_0$ be so that \eqref{a7} holds for all $t \geq t_0$. Then
integrating the last inequality over the interval $(t_0,t)$ we get
\begin{multline}\label{abV2}
\|v(t)\|_1^2+\al^2 \|A v(t)\|^2\leq
\\
\left[\|v(t_0)\|_1^2+\al^2 \|A v(t_0)\|^2\right] e^{-d_0(t-t_0)}+
\frac{R_2}{d_0}\left(1-e^{-d_0(t-t_0)}\right),
\end{multline}
where $R_2:=\frac1{\nu}\|h\|^2+
\frac{C}{\nu^3}M_1^6$.\\

\noindent The last inequality implies existence of an absorbing
ball

\begin{equation}\label{ball2}
 \mathcal{B}_2:=\{v \in V_2: \|Av\|\leq
M_2\},
\end{equation}
 where $M_2^2 =\frac{2R_2}{(\al^2+\la_1^{-1})d_0}.$ That is,
for all $t>>1,$ we have $\|Av(t)\|\leq M_2.$\\

\noindent Similarly, we can prove the following theorem
\begin{theorem}\label{atr2} If $h \in V_1$, then the semigroup $S(t): V_2\rightarrow V_2$
has a global attractor $\mathcal{A}_2\subset V_2$. The attractor
$\mathcal{A}_2 $ is compact, connected and invariant. Moreover,
$\mathcal{A}_2 $ is a bounded set in $V_3$.
\end{theorem}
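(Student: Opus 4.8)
The plan is to reproduce, one level of regularity higher, the three ingredients already assembled for $\mathcal{A}_1$ in $V$: a bounded absorbing set, asymptotic compactness of $S(t)$, and a bootstrap that improves the regularity of the attractor. The absorbing ball $\mathcal{B}_2$ in $V_2$ is the one derived in \eqref{ball2}, so $S(t):V_2\to V_2$ is bounded and dissipative. Since every bounded, dissipative and asymptotically compact semigroup possesses a compact, connected, invariant global attractor, the existence and the qualitative properties of $\mathcal{A}_2$ follow at once from asymptotic compactness; the two steps that require actual work are (a) asymptotic compactness in the $V_2$ topology and (b) the improvement of regularity from $V_2$ to $V_3$.

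For (a) I would use exactly the splitting $S(t)=Y(t)+Z(t)$ of \eqref{rep}, with $Y$ and $Z$ governed by \eqref{a72} and \eqref{a73}. Because the linear operator in \eqref{a72} commutes with all powers of $A$, taking the $H$-inner product of \eqref{a72} with $A^2 y$ yields
\[
\frac{d}{dt}\left[\|y(t)\|_2^2+\al^2\|y(t)\|_3^2\right]+d_0\left[\|y(t)\|_2^2+\al^2\|y(t)\|_3^2\right]\leq 0,
\]
so that $Y(t)$ is exponentially contractive in $V_2$, just as \eqref{con} shows in $V$. For $Z(t)$ I would show that it maps the $V_2$ absorbing ball into $V_{2+\sigma}$ for some $\sigma\in(0,1/2)$. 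For $v$ in $\mathcal{B}_2$ the relevant product estimate together with \eqref{Agm} gives a bound $\|B(v,v)\|_\sigma\leq C(M_1,M_2)$; keeping $\sigma<1/2$ is important here, since it stays strictly below the threshold at which $D(A^{\sigma/2})$ begins to feel the Dirichlet boundary condition, so that $P$ is bounded on the relevant space and no boundary compatibility is needed. Then $g:=h-B(v,v)\in L^{\infty}(\R^+;V_\sigma)$ (recall $h\in V_1\subset V_\sigma$), and Proposition \ref{le2} with $s=2+\sigma$ gives $z\in C(\R^+;V_{2+\sigma})$. Since $V_{2+\sigma}\hookrightarrow V_2$ compactly, $Z(t)$ is compact, and Theorem \ref{AK} yields asymptotic compactness; the global attractor $\mathcal{A}_2$ then exists and is compact, connected and invariant.

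For (b) I would repeat the weak-limit/bootstrap argument used above to place $\mathcal{A}_1$ in $V_2$. By invariance and the exponential decay of $Y$, every $u\in\mathcal{A}_2$ is the $V_2$-limit of a sequence $z(t_k)$, $t_k\to\infty$, with $v_k(0)\in\mathcal{A}_2$, exactly as in \eqref{lim}. The $V_{2+\sigma}$ bound on $z$ then shows that $\mathcal{A}_2$ is bounded in $V_{2+\sigma}$; feeding this improved regularity of $v$ back into the estimate of $B(v,v)$ and reapplying Proposition \ref{le2} raises the exponent, and iterating in half-steps carries $z$, and hence $\mathcal{A}_2$, up to $V_3$.

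The main obstacle is the nonlinear estimate at the very top of this ladder, namely controlling $B(v,v)$ in $V_1=D(A^{1/2})=V$ for $v\in\mathcal{A}_2$, which is what is needed (through Proposition \ref{le2} with $s=3$ and the hypothesis $h\in V_1$) to reach exactly $V_3$ rather than $V_{3-\eb}$. Here $D(A^{1/2})$ genuinely encodes the homogeneous Dirichlet condition, and since the Leray projection $P$ need not preserve the zero trace, the fractional-power bookkeeping for the Stokes operator must be carried out with care. This is precisely why the assumption $h\in V_1$ is imposed and why the bootstrap is performed in small increments, so that at each stage $B(v,v)$ is only estimated in a space whose characterization is unambiguous.
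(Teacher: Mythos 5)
Your architecture is the paper's own: for Theorem \ref{atr2} the paper offers no details beyond the phrase ``similarly, we can prove,'' meaning that the three-step scheme used for $\mathcal{A}_1$ --- the absorbing ball \eqref{ball2}, the splitting \eqref{rep} with $Y(t)$ contractive and $Z(t)$ smoothing via Proposition \ref{le2} and Theorem \ref{AK}, and the weak-limit bootstrap \eqref{lim} --- is to be repeated one rung higher, and that is exactly what you do. Two slips in your execution of the routine part, both fixable: first, for $v_0\in V_2$ you should pair \eqref{a72} with $Ay$, not $A^2y$, obtaining decay of $\|y\|_1^2+\al^2\|y\|_2^2$ (the quantity $\|y\|_2^2+\al^2\|y\|_3^2$ need not be finite for $V_2$ data); second, Proposition \ref{le2} gives $\sup_{[0,T)}\|z\|_{2+\sigma}\leq C\|g\|_{L^2(0,T;V_\sigma)}$, which grows like $\sqrt{T}$ when $g\in L^{\infty}(\R^+;V_\sigma)$, so it yields compactness of $Z(t)$ at each fixed $t$ but not the \emph{uniform-in-time} $V_{2+\sigma}$ bound your bootstrap invokes; that bound must come, as in \eqref{dmm1}--\eqref{dmm2}, from the damped energy inequality obtained by pairing \eqref{a73} with $A^{1+\sigma}z$ and using the dissipation rate $d_0$.

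The genuine gap is at the top of your ladder, and the fix you propose does not close it. Pairing \eqref{a73} with $A^{s-1}z$ requires $g=h-B(v,v)\in V_{s-2}=D(A^{(s-2)/2})$, and for $s-2>1/2$ this space carries the zero-trace condition. Although $(v\cdot\Nx)v$ does vanish on $\partial\Om$, the Leray projection writes $B(v,v)=(v\cdot\Nx)v-\Nx q$ with $q$ solving a homogeneous Neumann problem, so the tangential part of $\Nx q$ on $\partial\Om$ is in general nonzero and $B(v,v)\notin V_{s-2}$ for any $s>5/2$, \emph{no matter how regular $v$ is}. The alternative splitting $(g,A^{s-1}z)\leq\|A^{\gamma}g\|\,\|A^{s-1-\gamma}z\|$ with $\gamma<1/4$ (to stay below the trace threshold) forces $2s-2-2\gamma\leq s$, i.e.\ $s<5/2$ again. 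So iterating in half-steps stalls strictly below $V_{5/2}$: small increments postpone the obstruction but never cross it, and the hypothesis $h\in V_1$ disposes of $h$ only, not of the nonlinearity --- contrary to your closing claim that these two devices are ``precisely why'' the argument reaches $V_3$. To actually land in $V_3$ one needs either the periodic setting of Remark \ref{bc}, where $D(A^{s/2})=H^s\cap H$ for all $s$ and your ladder runs unobstructed to the top, or an additional Dirichlet-specific argument (a direct estimate of the pairing $(B(v,v),A^{s-1}z)$ with the boundary contribution handled explicitly) that your sketch --- and, it should be said, the paper's one-line proof --- does not supply.
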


\begin{remark}\label{A1A2} Let us note that in case
we assume  in Theorem \ref{atr1} that  $h\in V_1$, instead of  $h\in
H$, then the attractors $\mathcal{A}_1 $ and $\mathcal{A}_2 $
coincide.
\end{remark}

\section{Estimates for the Number of Determining Modes}

\noindent It is asserted, based on physical heuristic arguments,
that the long-time behavior of turbulent flows is determined by a
finite number degrees of freedom. This concept was formulated more
rigorously for 2D NS equations by introducing the notion of
determining modes in \cite{FP}. In \cite{FP} it was shown that there
exists a number $m$  such that if the first $m$ Fourier modes of two
different solutions of the NS equations have the same asymptotic
behavior, as $t\rightarrow \infty$, then the remaining
infinitely many number of modes have the same asymptotic behavior.\\

In \cite{La1} it was shown that the semigroup generated by the
initial boundary value problem for the 2D NS equations with
Dirichlet boundary condition has a global attractor which is
compact, invariant and connected . It was also established in
\cite{La1} that there exists a number $m$ such that if projections
of two different trajectories on the attractor on the $m$
dimensional subspace of $H$, spanned on the first $m$ eigenfunctions
of the Stokes operator, coincide for each $t\in\R$, then these
trajectories
completely coincide for each $t\in \R.$\\

The results obtained in \cite{FP} and \cite{La1} were developed,
generalized, and applied to various infinite dimensional dissipative
problems (see, e.g.,
\cite{Ch},\cite{CJTa},\cite{CJT},\cite{FMRT},\cite{FMTT},\cite{Foias-Titi},\cite{HT},\cite{ITi},
\cite{JT}, \cite{Jones-Titi},\cite{La2},\cite{OlTi},\cite{OlTi2} and
references therein).\\

In this section we are going to give estimates for the
number of  determining modes (both asymptotic and for trajectories
on the attractor) for 3D NSV
equations.\\

\noindent {\bf Asymptotic determining modes.} Let us denote by $P_m$
the $L^2$ -orthogonal projection from $H$ onto the $m$- dimensional
subspace
$H_m= \ \mbox{span} \ \{ w_1,w_2,...,w_m\}$. We set $Q_m=I-P_m$.\\
Let $v$ and $u$ be two solutions of NSV equations
\begin{equation}\label{d0}
v_t +\nu Av+\al^2 Av_t+B(v,v)=h(t), \ v(0)=v_0,
\end{equation}
\begin{equation}\label{d0a}
u_t +\nu Av+\al^2 Au_t+B(u,u)=g(t), \ v(0)=v_0.
\end{equation}
\begin{definition}\label{adm}
A set of modes $\{w_1,\cdots,w_m\}$ is called asymptotically
determining (see \cite{FMRT},\cite{FP}) if
$$
\lim_{t\rightarrow \infty} \|v(t)-u(t)\|_1=0
$$
whenever
$$
\lim_{t\rightarrow \infty} \|h(t)-g(t)\|_{-1}=0 \ \mbox{and} \
\lim_{t\rightarrow \infty} \|P_m(v(t)-u(t))\|_{1}=0.
$$
\end{definition}

\begin{theorem}\label{dm1} Assume that the following conditions are satisfied

\begin{equation}\label{e1}
\|h(t)\|_{-1}\leq{\mathbf h}<\infty, \ \forall t\in \R.
\end{equation}

\begin{equation}\label{e1a}
\lim_{t\rightarrow\infty}\|h(t)-g(t)\|_{-1}=0 \ \mbox{and} \
\lim_{t\rightarrow\infty}\|P_m(v(t)-u(t))\|=0.
\end{equation}
Then the first
 $m$ eigenfunctions of the Stokes operator are  asymptotically determining for the
 NSV equations with homogeneous Dirichlet boundary conditions, provided $m$ is
 large enough such that
\begin{equation}\label{asd}
\la_{m+1} >C\frac{\mathbf{h}^4}{\alpha^4\nu^8d_1^2}.
\end{equation}
\end{theorem}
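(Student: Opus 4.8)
The plan is to derive an energy estimate for the difference $w:=v-u$ and then apply the Gronwall-type Lemma~\ref{le1}. First I would subtract \eqref{d0a} from \eqref{d0}; using the identity $B(v,v)-B(u,u)=B(v,w)+B(w,u)$, the difference solves
\[
w_t+\nu Aw+\al^2 Aw_t+B(v,w)+B(w,u)=h(t)-g(t).
\]
Taking the $H$ inner product with $w$, using $b(v,w,w)=0$ from \eqref{bfc} and noting that $(w_t,w)+\al^2(Aw_t,w)=\tfrac12\tfrac{d}{dt}E(t)$ with $E(t):=\|w(t)\|^2+\al^2\|w(t)\|_1^2$, I obtain the identity
\[
\tfrac12\tfrac{d}{dt}E(t)+\nu\|w\|_1^2=-b(w,u,w)+(h-g,w).
\]

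Next I would estimate the right-hand side. The trilinear estimate \eqref{bf1} together with Young's inequality \eqref{You} gives $|b(w,u,w)|\le C\|w\|^{1/2}\|w\|_1^{3/2}\|u\|_1\le \tfrac{\nu}{4}\|w\|_1^2+\tfrac{C}{\nu^3}\|w\|^2\|u\|_1^4$, while the Cauchy--Schwarz and Young inequalities give $|(h-g,w)|\le\tfrac{\nu}{4}\|w\|_1^2+\tfrac1\nu\|h-g\|_{-1}^2$. Absorbing the two $\tfrac{\nu}{4}\|w\|_1^2$ terms on the left yields
\[
\tfrac12\tfrac{d}{dt}E+\tfrac{\nu}{2}\|w\|_1^2\le \tfrac{C}{\nu^3}\|w\|^2\|u\|_1^4+\tfrac1\nu\|h-g\|_{-1}^2.
\]
At this point I would invoke the absorbing-ball bound: since \eqref{e1} and \eqref{e1a} force $\|g\|_{-1}$ to be eventually bounded in terms of ${\mathbf h}$, the estimate \eqref{a7} gives $\|u(t)\|_1\le M_1$ for all $t\ge t_0$, with $M_1\sim {\mathbf h}/(\nu\al\sqrt{d_1})$.

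The crucial step is the spectral splitting. Writing $w=P_mw+Q_mw$, so that $\|w\|^2=\|P_mw\|^2+\|Q_mw\|^2$, the high-mode Poincar\'e inequality $\|Q_mw\|^2\le\la_{m+1}^{-1}\|Q_mw\|_1^2\le\la_{m+1}^{-1}\|w\|_1^2$ lets me bound $\tfrac{C}{\nu^3}\|w\|^2\|u\|_1^4\le \tfrac{C}{\nu^3}M_1^4\|P_mw\|^2+\tfrac{C M_1^4}{\nu^3\la_{m+1}}\|w\|_1^2$. The spectral-gap hypothesis \eqref{asd} is exactly the requirement $CM_1^4/(\nu^3\la_{m+1})\le \nu/4$ after substituting $M_1^4\sim {\mathbf h}^4/(\nu^4\al^4 d_1^2)$, so the last term is absorbed into $\tfrac{\nu}{2}\|w\|_1^2$. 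Converting the remaining $\tfrac{\nu}{4}\|w\|_1^2$ into control of $E$ via Poincar\'e \eqref{Poi} as in the absorbing-ball computation (so that $\tfrac{\nu}{4}\|w\|_1^2\ge\tfrac{d_0}{4}E$), I arrive at
\[
\tfrac{d}{dt}E+\tfrac{d_0}{2}E\le \tfrac{C M_1^4}{\nu^3}\|P_mw(t)\|^2+\tfrac2\nu\|h(t)-g(t)\|_{-1}^2=:b(t).
\]

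Finally I would apply Lemma~\ref{le1} with $\phi=E$, $a(t)\equiv d_0/2$ (so $\gamma=d_0/2>0$ and $\Gamma=0$) and the non-negative forcing $b(t)$. By \eqref{e1a} both $\|P_mw(t)\|\to0$ and $\|h(t)-g(t)\|_{-1}\to0$, hence $b(t)\to0$ and its sliding time-averages vanish; Lemma~\ref{le1} then yields $E(t)\to0$, i.e. $\|v(t)-u(t)\|_1\to0$, which is the assertion. I expect the main obstacle to be the bookkeeping in the spectral step: one must split the $\nu\|w\|_1^2$ dissipation carefully so that the coefficient absorbed through the spectral gap reproduces \eqref{asd} with the precise powers of $\nu$, $\al$ and $d_1$, and one must secure the uniform bound $\|u\|_1\le M_1$ from hypotheses that only control $h$ and the difference $h-g$ rather than $g$ itself.
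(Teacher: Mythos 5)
Your proof is correct and arrives at the paper's condition \eqref{asd} with the right powers of $\nu$, $\alpha$ and $d_1$, but it takes a genuinely different route. The paper writes the difference equation with the nonlinearity arranged as $B(v,w)+B(w,v)-B(w,w)$ --- so that only the bound \eqref{e2} on $v$ is ever needed --- and then tests against the high-mode projection $q=Q_m w$ alone: every term containing $p=P_m w$ (namely $2(\theta,q)-2b(v,p,q)-2b(p,p,q)+2b(q,p,q)$) is swept into the vanishing forcing $b(t)$, the spectral gap acts through $\|q\|\leq \lambda_{m+1}^{-1/2}\|q\|_1$ inside the single term $b(q,v,q)$, and Lemma~\ref{le1} yields $\|q\|_1\to 0$, with the hypothesis disposing of $p$ separately. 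You instead test against the full difference $w$ with the arrangement $B(v,w)+B(w,u)$, which kills one term via $b(v,w,w)=0$ but makes the surviving trilinear term carry $\|u\|_1$ rather than $\|v\|_1$; the spectral gap then enters not through a projected equation but through the splitting $\|w\|^2=\|P_m w\|^2+\|Q_m w\|^2$ inside the Young-inequality term $\frac{C}{\nu^3}\|w\|^2\|u\|_1^4$. The trade-offs: your route avoids the paper's bookkeeping of the three cross terms in $p$ (each needing an estimate of the type \eqref{d6}) and delivers $\|v(t)-u(t)\|_1\to 0$ in a single inequality, but it obliges you to bound $u$ --- a step the paper never needs --- and you correctly flag and resolve this point: since $\|g(t)\|_{-1}\leq \|h(t)\|_{-1}+\|h(t)-g(t)\|_{-1}$ is eventually bounded by $\mathbf{h}+\varepsilon$ thanks to \eqref{e1} and \eqref{e1a}, the absorbing-ball computation behind \eqref{e2} applies verbatim to $u$ and gives $\limsup_{t\to\infty}\|u(t)\|_1\leq \mathbf{h}/(\alpha\nu\sqrt{d_1})$, which is all that the strict inequality in \eqref{asd} with a generic constant $C$ requires. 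One cosmetic remark: with your constant damping coefficient $a(t)\equiv d_0/2$ the full strength of Lemma~\ref{le1} is not needed --- classical Gronwall plus $b(t)\to 0$ already suffices --- though the paper's own endgame \eqref{d7}, with constant $d_m$, is in exactly the same situation.
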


\begin{proof} It is clear that the function $w=v-u$ satisfies
\begin{equation}\label{d1}
w_t +\nu Aw+\al^2 Aw_t+B(v,w)+B(w,v)-B(w,w)=\ta(t), \ v(0)=v_0,
\end{equation}
where $\ta(t)=h(t)-g(t)$.\\

\noindent It is clear from the proof of $(E_1)$ that
\begin{equation}\label{e2}
\limsup_{t\rightarrow \infty}\| v(t)\|_1 \leq \frac{{\mathbf
h}}{\alpha\nu\sqrt{d_1}}.
\end{equation}

\noindent  Multiplying (\ref{d1}) by $q(t)=Q_mw(t)$ in $H$ we obtain
\begin{multline}\label{d2}
\frac d{dt}\left[\|q\|^2+\al^2\| q\|_1^2\right]+2\nu\|
q\|_1^2+2b(q,v,q)=\\ 2(\ta,q)-2b(v,p,q)-2b(p,p,q)+2b(q,p,q),
\end{multline}
where $p=P_m w$.\\
\noindent Before estimating the terms of \eqref{d2} we observe
that for each $\phi\in V$ we have
\begin{equation}\label{Pom}
\|Q_m \phi \|_1\geq \la_{m+1}\|Q_m\phi\| \ \mbox{and}  \ \|P_m
\phi\|_1\leq \la_m \|P_m \phi\|.
\end{equation}

\noindent Due to the  inequality \eqref{bf1} the term $b(q,v,q)$ has
the following estimate:
\begin{equation}\label{bfu1}
2|b(q,v,q)|\leq C\|q\|^{1/2}\|\|q\|_1^{3/2}\|v\|_1\leq
\frac{C}{\la_{m+1}^{1/4}} \|q\|_1^{2}\|v\|_1.
\end{equation}

\noindent The first term in the right-hand side of \eqref{d2} has
the estimate
\begin{equation}\label{ft}
2|(\theta,q)|\leq \frac 2{\nu}\|\theta\|_{-1}^2
+\frac{\nu}2\|q\|_1^2.
\end{equation}

Employing the inequalities \eqref{bf1ad} and \eqref{Pom}  we
estimate the second term in the right-hand side of (\ref{d2}) as
follows
\begin{equation}\label{d6}
2|b(v,p,q)|\leq \|v\|_1\|p\|_1\|q\|^{1/2}\|q\|_1^{1/2}\leq
C\lambda_m
\lambda_{m+1}^{-1/4}\|p\|_1\left(\|q\|_1^2+\|v\|_1^2\right).
\end{equation}
Other terms in the right-hand side of (\ref{d2}) can be estimated in
a similar way to (\ref{d6}). \\
Using estimates \eqref{bfu1}-\eqref{d6} and the estimates of other
terms in the right-hand side of  (\ref{d2}) we obtain
\begin{equation}\label{ns}
\frac d{dt}\left[\|q\|^2+\al^2\|q\|_1^2\right]+\frac{\nu}2\|q\|_1^2+
\|q\|_1^2\left(\nu-\frac{C}{\la_{m+1}^{1/4}}\| v\|_1\right)\leq
b(t),
\end{equation}
where $b(t)$  is satisfying the corresponding condition of Lemma
1.\\

\noindent Let us choose $t_1>0$ so large that $\|v(t)\|_1\leq M_1,$
for all $ t\geq t_1$ and  $m$ so that $
\mu(m):=\la_{m+1}-(\frac{CM_1}{\nu})^4>0$. Then it follows from the
last inequality the following relation
$$
\frac
d{dt}\left[\|q\|^2+\al^2\|q\|_1^2\right]+\frac{\nu}2\|q\|_1^2\leq
b(t),\ \mbox{for all} \  t\geq t_1,
$$
or
\begin{equation}\label{d7}
\frac d{dt}\left[\|q\|^2+\al^2\|
q\|_1^2\right]+d_m\left[\|q\|^2+\al^2\|q\|_1^2\right]\leq b(t), \ \
\mbox{for all} \ t\geq t_1,
\end{equation}
where $d_m=\frac{\nu}4\min\{\frac{1}{\al^2},\la_{m+1}\}.$\\

\noindent Thus, due to Lemma 1 the statement of the theorem follows.
\end{proof}
\begin{remark}\label{3DNS} Let us observe that the number
$m$, for which
$\lambda_{m+1}>\frac{C\mathbf{h}^4}{\nu^8\lambda_1^2}$ holds, is
 an upper bound for the minimal number of asymptotically determining
modes for weak solutions (i.e., solutions belonging to
$L^{\infty}(\R^+;H)\cap L_{loc}(\R^+;V)$) of the initial boundary
value problem for
the 3D Navier Stokes equations.\\
In fact,  for  weak solutions of NS equations instead of
\eqref{ns} we have
$$
\frac d{dt}\|q\|^2+ \la_{m+1}^{3/4}\left(\nu\la_{m+1}^{1/4}-C\|
v\|_1\right)\|q\|^2\leq b(t),
$$
and instead of \eqref{e2}  we have for weak solutions of NS
equations (see, e.g., \cite{CF}, \cite{Constantin-Doering-Titi},
\cite{HT} and \cite{Tem})
$$
\limsup_{t\rightarrow \infty}\frac1T\int_t^{t+T}\|
v(\tau)\|_1^2d\tau\leq \frac{\mathbf{h}^2}{T\nu^3\la_1^2}+
\frac{\mathbf{h}^2}{\nu^2\la_1}.
$$
Hence
$$
\limsup_{t\rightarrow \infty}\frac1T\int_t^{t+T}\|
v(\tau)\|_1d\tau\leq \frac{\mathbf{h}}{\sqrt{T}\nu^{3/2}\la_1}+
\frac{\mathbf{h}}{\nu\sqrt{\la_1}}.
$$
Thus, the function $a(t):=
\la_{m+1}^{3/4}\left(\nu\la_{m+1}^{1/4}-C\| v\|_1\right)$ satisfies
conditions of Lemma \ref{le1} provided $T$ is large enough and
$$
\la_{m+1}> C\frac{\mathbf{h}^4}{\nu^8\la_1^2}.
$$

\noindent Different estimates of asymptotic determining modes for
weak solutions of 3D NS equations are obtained in \cite{CFMT} (see
also \cite{CFT}, \cite{FMRT} and references therein). The estimate
obtained in \cite{CFMT} involves generalization of the so called
mean rate dissipation of energy, per mass and time, i.e. it involves
$$
\varepsilon=\nu \limsup_{t\rightarrow\infty}\frac1t\int_0^t\sup_{x
\in \Om}\|\Nx v(x,\tau)\|^2d\tau.
$$
For other related results concerning estimates of the number of
asymptotic determining degrees of freedom for weak solutions of the
3D NS equations see, e.g., \cite{Constantin-Doering-Titi}, \cite{HT}
and references therein.
\end{remark}

\noindent {\bf Determining modes on the attractor.} Next we give an
estimate of determining modes for trajectories on the attractor.

\begin{definition}\label{adm}
A set of modes $\{w_1,\cdots,w_m\}$ is called determining on the
attractor (in the sense of  \cite{La1}) if for each two trajectories
$v(t)$ and $u(t)$ on the attractor $\mathcal{A}_1$ the equality
$$
 \|P_m(v(t)-u(t))\|_{1}=0, \ \ \mbox{for all} \  t \in \R
$$
implies
$$
v(t)=u(t), \ \ \forall t\in R.
$$
\end{definition}

\noindent Let $v$ and $u$ be arbitrary two trajectories in the
attractor $\mathcal{A}_1$ of (3.1). Then $w=v-u$ satisfies
\begin{equation}\label{dL1}
w_t+\al^2 Aw_t+\nu Aw +B(w,v)+B(u,w)=0.
\end{equation}
Taking the inner product of (\ref{dL1}) with $q=Q_m w$ we get
\begin{equation}\label{dL2}
\frac d{dt}\left[ \|q\|^2+\|q\|_1^2\right]+2\nu\|
q\|_1^2=-2b(w,v,q)-2b(u,w,q).
\end{equation}
Assume that $P_mw(t)=0$, for all $t \in \R$, then $Q_mw=q$ satisfies
\begin{equation}\label{dL2a}
\frac d{dt}\left[ \|q\|^2+\|q\|_1^2\right]+2\nu\| q\|_1^2=2b(q,v,q).
\end{equation}
Due to  \eqref{bf1}  we have
$$
|2b(q,v,q)| \leq C \|q\|^{1/2}_1\|q\|_1^{3/2}\|v\|_1
$$
Noting that on the attractor $\mathcal{A}_1$ we have $\|v\|_1 \leq
M_1$, we employ the last inequality, and inequality \eqref{Pom} to
obtain from (\ref{dL2})
\begin{equation}\label{dL3}
\frac d{dt}\left[\|q\|^2+\al^2\|q\|_1^2\right]+\nu\|q\|_1^2 +
\|q\|^{1/2}\|q\|^{3/2}_1\left(\nu \la_{m+1}^{1/4}-CM_1\right) \leq
0.
\end{equation}
Let us choose $m$, large enough, so that $\la_{m+1}\geq
(\frac{M_1C}{\nu})^4$. Then (\ref{dL3}) implies
$$
\frac d{dt}\left[ \|q\|^2+\alpha^2\|q\|_1^2\right]+l_m\left[
\|q\|^2+\alpha^2\| q\|_1^2\right]\leq 0,
$$
where $l_m=\frac{\nu}2\min\{\la_{m+1},\frac1{\al^2}\}$.\\

\noindent Finally, we integrate the last inequality and get
\begin{equation}\label{dL4}
\|q(t)\|^2+\al^2\|q(t)\|_1^2\leq \exp[-l_m(t-s)]\left[
\|q(s)\|^2+\al^2 \|q(s)\|_1^2\right].
\end{equation}
Passing to the limit as $s\rightarrow -\infty$ we obtain
$$
\|q(t)\|^2+\al^2\|q(t)\|_1^2=0, \ \mbox{for all} \  t\in \R.
$$
Thus, the following theorem is true.
\begin{theorem}\label{dmatr} Let  $v$ and $u$ be two  solutions of the
problem (\ref{a1})-(\ref{a3}) from the attractor $\mathcal{A}_1$.
Assume that $P_m(u(t))= P_m(v(t)) , \forall t \in \R$, where $m$ is
so that
\begin{equation}\label{ldm1}
\la_{m+1}\geq  C\frac{\|h\|_{-1}^4}{\alpha^4\nu^8d_1^2}.
\end{equation}
Then $v(t)=u(t), \ \mbox{for all} t \in \R.$
\end{theorem}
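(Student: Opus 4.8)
The plan is to work with the difference $w=v-u$ of the two trajectories and to exploit the fact that, on $\mathcal{A}_1$, $w$ is defined and $V$-bounded for \emph{all} $t\in\R$, so that a decay inequality can be integrated backwards in time. First I would record the equation for $w$: subtracting the two copies of \eqref{a4} and using bilinearity of $B$ to write $B(v,v)-B(u,u)=B(w,v)+B(u,w)$, I obtain \eqref{dL1}. The standing hypothesis $P_m(v(t)-u(t))=0$ for every $t\in\R$ means that $p:=P_mw\equiv0$ and hence $w=q:=Q_mw$; this is precisely what makes the attractor estimate cleaner than the asymptotic one of Theorem \ref{dm1}, since all modes below $\la_{m+1}$ are switched off identically.

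Next I would take the $H$ inner product of \eqref{dL1} with $q=Q_mw$. Because $w=q$ here, the term $b(u,w,q)=b(u,q,q)$ vanishes by the antisymmetry \eqref{bfc}, leaving only $b(q,v,q)$ on the right, as in \eqref{dL2a}. I would bound this by the trilinear inequality \eqref{bf1}, $|2b(q,v,q)|\le C\|q\|^{1/2}\|q\|_1^{3/2}\|v\|_1$, use $\|v\|_1\le M_1$ on the attractor, and convert the low-order factor $\|q\|^{1/2}$ into $\|q\|_1^{1/2}$ via the spectral gap $\|q\|\le\la_{m+1}^{-1/2}\|q\|_1$ from \eqref{Pom}. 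Splitting the dissipation $2\nu\|q\|_1^2=\nu\|q\|_1^2+\nu\|q\|_1^2$ and observing $\nu\|q\|_1^2\ge\nu\la_{m+1}^{1/4}\|q\|^{1/2}\|q\|_1^{3/2}$, the nonlinear contribution is absorbed, yielding \eqref{dL3}. Choosing $m$ so that $\nu\la_{m+1}^{1/4}\ge CM_1$, i.e. $\la_{m+1}\ge(CM_1/\nu)^4$ — which is exactly \eqref{ldm1} once $M_1=\frac{2}{\nu\al\sqrt{d_1}}\|h\|_{-1}$ is inserted — lets me drop the absorbed term and, using the Poincar\'e bound once more, reach the linear inequality $\frac d{dt}[\|q\|^2+\al^2\|q\|_1^2]+l_m[\|q\|^2+\al^2\|q\|_1^2]\le0$ with $l_m=\frac\nu2\min\{\la_{m+1},\al^{-2}\}$.

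The decisive step is the backward-in-time argument. Integrating from $s$ to $t$ gives \eqref{dL4}, $\|q(t)\|^2+\al^2\|q(t)\|_1^2\le e^{-l_m(t-s)}[\|q(s)\|^2+\al^2\|q(s)\|_1^2]$. Here I would invoke invariance and boundedness of the attractor: since $v(s),u(s)\in\mathcal{A}_1$ for every $s\in\R$ and $\mathcal{A}_1$ is bounded in $V$, the bracket is bounded uniformly in $s$, while $e^{-l_m(t-s)}\to0$ as $s\to-\infty$. Letting $s\to-\infty$ then forces $\|q(t)\|^2+\al^2\|q(t)\|_1^2=0$, so $Q_mw(t)=0$ for all $t$; together with $P_mw\equiv0$ this gives $w\equiv0$, that is $v(t)=u(t)$ for all $t\in\R$.

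I expect the only genuine obstacle to be this final limit: it rests essentially on the fact that the NSV semigroup is globally well posed backwards in time and that $\mathcal{A}_1$ is a bounded invariant set, so trajectories on it extend to all of $\R$ with a uniform $V$ bound — a feature particular to this damped-hyperbolic-type system and unavailable for the parabolic Navier--Stokes equations. Everything else (the trilinear estimate, the spectral gap, and the Gronwall integration) is routine; the only care required is in tracking constants so that the threshold emerges precisely as \eqref{ldm1}.
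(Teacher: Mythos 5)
Your proposal is correct and follows essentially the same route as the paper: the same difference equation \eqref{dL1}, the inner product with $q=Q_mw$ (with $b(u,q,q)=0$ by \eqref{bfc}), the estimate \eqref{bf1} combined with the attractor bound $\|v\|_1\leq M_1$ and the spectral gap \eqref{Pom}, the same splitting of the dissipation to reach \eqref{dL3}, the threshold $\la_{m+1}\geq (CM_1/\nu)^4$ giving \eqref{ldm1}, and the backward-in-time Gronwall limit $s\to-\infty$ using boundedness and invariance of $\mathcal{A}_1$. Your closing remark correctly identifies the key structural point -- that trajectories on the attractor are globally defined and uniformly $V$-bounded on all of $\R$ -- which is exactly what the paper's passage to the limit in \eqref{dL4} relies on.
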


\section{Estimates of Dimensions of the  Global Attractor}

In this section we show the differentiability of the semigroup with
respect to the initial data. This is to prepare for implementing
Theorem \ref{frdim} in order to estimate the dimension of the global
attractor.

\begin{theorem}\label{dat1}
Let $u_0$ and $v_0$ be two elements of $V$. Then there is a constant
$K= K(\|u_0\|_1,\|v_0\|_1)$ such that
\begin{equation}\label{da1}
\|S(t)v_0-S(t)u_0 -\Lambda(t)(v_0-u_0)\|_1\leq K\|v_0-u_0\|_1^2,
\end{equation}
where  the linear operator $\Lambda(t): V\rightarrow V,$ for $ t>0$
is the solution operator of the problem
\begin{equation}\label{da2}
\xi_t+\al^2 A\xi_t+A\xi+B(\xi,v)+B(v,\xi)-B(\xi,\xi)=0, \ \xi(0)=
v_0-u_0,
\end{equation}
and $v(t)=S(t)v_0$. That is, for every $t>0$, the map $S(t)v_0$, as
a map $S(t):V \rightarrow
 V$ is Fr$\acute{e}$chet differentiable with respect to the initial data, and
its
 Fr$\acute{e}$chet derivative $D_{v_0}(S(t)v_0)w_0=\Lambda(t)w_0.$
\end{theorem}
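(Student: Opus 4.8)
The plan is to establish Fr\'echet differentiability of $S(t)$ by estimating the difference between the nonlinear flow and its formal linearization. Let $v(t)=S(t)v_0$ and $u(t)=S(t)u_0$, set $w=v-u$, and let $\xi(t)=\Lambda(t)(v_0-u_0)$ solve the linear problem \eqref{da2}. The quantity to control is the remainder $\eta:=w-\xi$, which measures the failure of the linearization. First I would write down the equation satisfied by $\eta$: subtracting \eqref{da2} from the equation for $w$ (namely \eqref{d1} with $\ta=0$, i.e.\ $w_t+\al^2 Aw_t+\nu Aw+B(v,w)+B(w,v)-B(w,w)=0$) and using bilinearity of $B$, one obtains a linear-in-$\eta$ equation of the form
$$
\eta_t+\al^2 A\eta_t+\nu A\eta+B(v,\eta)+B(\eta,v)=F,
$$
with $\eta(0)=0$, where the forcing $F$ collects the purely quadratic-in-$w$ terms (the $B(w,w)$ contributions and the cross terms involving $B(\xi-v,\cdot)$) that are genuinely of second order in $w_0=v_0-u_0$.

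The main estimate is an energy estimate in $V$: I would take the $H$ inner product of the $\eta$-equation with $A\eta$, producing
$$
\tfrac{d}{dt}\bigl[\|\eta\|_1^2+\al^2\|A\eta\|^2\bigr]+2\nu\|A\eta\|^2 = -2(B(v,\eta)+B(\eta,v),A\eta)+2(F,A\eta).
$$
The trilinear terms on the right are absorbed using the estimates \eqref{bf4}--\eqref{Agm} together with Young's inequality \eqref{You}, yielding a bound of the form $\tfrac{d}{dt}\|\eta\|_1^2 \le c_1(t)\|\eta\|_1^2 + c_2(t)\|F\|^2$, where the coefficients $c_1,c_2$ depend only on $\|v\|_1,\|v\|_2$ (and hence, via the absorbing-ball estimates \eqref{a7} and \eqref{abV2}, ultimately on $\|v_0\|_1,\|u_0\|_1$). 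Since $\eta(0)=0$, Gronwall's inequality then gives $\|\eta(t)\|_1^2 \le K_0(t)\,\sup_{[0,t]}\|F\|^2$.

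The remaining point is to show the forcing is genuinely quadratic, i.e.\ $\sup_{[0,t]}\|F\| \le C\|v_0-u_0\|_1^2$. This requires a preliminary continuous-dependence estimate, $\sup_{[0,t]}\|w\|_1 \le C\|v_0-u_0\|_1$, which follows from an energy estimate for \eqref{d1} (with $\ta=0$) exactly as in the absorbing-ball computation, using \eqref{bf1} and Gronwall. Armed with this, the quadratic terms in $F$—schematically $B(w,w)$ and the terms coupling $w$ with $\eta$—are estimated by the trilinear bounds so that $\|F\|\lesssim \|w\|_1^2 + \|w\|_1\|\eta\|_1$; the second piece can be reabsorbed into the left-hand side of the Gronwall argument, leaving $\|\eta(t)\|_1 \le K\|v_0-u_0\|_1^2$, which is precisely \eqref{da1}.

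The step I expect to be the main obstacle is the bookkeeping on the forcing term $F$ and the interplay between the two Gronwall arguments: one must first secure the first-order bound $\|w\|_1 \le C\|v_0-u_0\|_1$ uniformly on $[0,t]$, then feed it into the second-order estimate while ensuring that every constant depends only on $\|u_0\|_1,\|v_0\|_1$ and not on higher norms of the initial data. Here the Voight regularization is crucial: the term $\al^2 A\eta_t$ supplies the $\al^2\|A\eta\|^2$ control that makes the $V$-energy estimate close, which is why the nonlinear term $B(w,w)$—problematic for the genuine 3D Navier--Stokes equations—can be handled. One must also take care that the uniform differentiability is on a fixed bounded set so that $K$ is uniform, as required for the later application of Theorem \ref{frdim}.
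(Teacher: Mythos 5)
Your overall architecture is the paper's: set $\eta=w-\xi$ with $w=v-u$, observe that $\eta$ solves a linear equation whose forcing is quadratic in $w$, prove a first-order continuous-dependence bound $\sup_{[0,t]}\|w(\tau)\|_1\leq C(t)\|w_0\|_1$ by an energy estimate plus Gronwall, and then run Gronwall again on $\eta$. The genuine gap is in your main energy estimate, which is pitched one regularity level too high and does not close for the data the theorem actually allows. You multiply the $\eta$-equation by $A\eta$ and claim the coefficients are controlled by $\|v\|_1$ and $\|v\|_2$ via \eqref{a7} and \eqref{abV2}. But for $v_0\in V$ only, $v(t)=S(t)v_0$ stays merely in $V$: the NSV semigroup is not smoothing (this is exactly why the paper must prove asymptotic compactness by splitting, rather than compactness of $S(t)$ itself), and the $V_2$ bound \eqref{abV2} presupposes $\|Av(t_0)\|<\infty$, i.e.\ $V_2$ initial data. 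Likewise the forcing $F=B(w,w)$ is, for $w\in V$, only in $V_{-1/2}$ (this is the content of \eqref{Bvv1}); the bound $\|F\|\lesssim\|w\|_1^2$ in the $H$-norm is false --- one would need $w\in V_{3/2}$ or $V_2$, e.g.\ via Agmon \eqref{Agm}, which you indeed invoke but which is unavailable here. So the pairings $(F,A\eta)$ and $(B(v,\eta)+B(\eta,v),A\eta)$ in your estimate are neither defined nor controllable, and your Gronwall step collapses at the stated level of regularity.

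The repair is precisely where the Voight structure helps, and you located it at the wrong level: because of the term $\al^2 A\eta_t$, the \emph{lowest-order} energy already contains the $V$-norm. Multiplying the $\eta$-equation by $\eta$ in $H$ gives $\frac{d}{dt}\bigl[\|\eta\|^2+\al^2\|\eta\|_1^2\bigr]+2\nu\|\eta\|_1^2=-2b(\eta,v,\eta)-2b(w,w,\eta)$, using $b(v,\eta,\eta)=0$ from \eqref{bfc}; then \eqref{bf1} handles $b(\eta,v,\eta)$ with $\|v(t)\|_1$ bounded in terms of $\|v_0\|_1$ by $(E_2)$, while antisymmetry $b(w,w,\eta)=-b(w,\eta,w)$ and \eqref{bf4} give $|2b(w,w,\eta)|\leq C\la_1^{-1/4}\|w\|_1^2\|\eta\|_1\leq\nu\|\eta\|_1^2+C\nu^{-1}\la_1^{-1/2}\|w\|_1^4$ --- only $V$-norms are needed. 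Feeding in the first-order bound (which you set up correctly; the paper derives it from \eqref{bf2} and $(E_2)$, giving $\|w(t)\|_1\leq Ce^{\kappa_1 t/2}\|w_0\|_1$) and integrating yields $\|\eta(t)\|_1\leq K\|w_0\|_1^2$, which is \eqref{da1}. Two smaller points: with the (correct, linear) variational equation the forcing is exactly $B(w,w)$ and the terms $B(v,\eta)$, $B(\eta,v)$ sit on the linear left-hand side, so the cross terms and ``reabsorption'' you worry about never arise; and note that \eqref{da2} as printed contains a spurious quadratic term $-B(\xi,\xi)$ (and drops the factor $\nu$) --- the proof, yours and the paper's, uses the linear equation $\xi_t+\al^2A\xi_t+\nu A\xi+B(\xi,v)+B(v,\xi)=0$.
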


\begin{proof} It is easy to see that the function
$\eta(t):=v(t)-u(t)-\xi(t)=S(t)(v_0-u_0)-\xi(t)$ satisfies
$$
\eta_t+\al^2 A\eta_t+\nu A\eta+B(\eta,v)+B(v,\eta)-B(w,w)=0,
$$
where $w=v-u.$ Taking the inner product of the last equation with
$\eta$ we obtain
\begin{equation}\label{da3}
\frac{d}{dt}\left[\|\eta\|^2+\al^2\|\eta\|_1^2\right]+2\nu\|\eta\|_1^2=-2b(\eta,v,\eta)
-2b(w,w,\eta).
\end{equation}
By using inequalities (\ref{a7}) and (\ref{Lad4}) and Young's
inequality we can estimate the terms in the right-hand side of
(\ref{da3}) as follows.\\
\noindent By \eqref{bf1} we have
$$
|2b(\eta,v,\eta)|\leq C\|v\|_1\|\eta\|^{1/2}\|\eta\|_1^{3/2}\leq
CM_1\|\|\eta\|^{1/2}\| \eta\|_1^{3/2}\leq
\frac{CM_1}4(\|\eta\|^2+3\|\eta\|_1^2).
$$
By \eqref{bf4}
$$
|2b(w,w,\eta)|=|2b(w,\eta,w)|\leq
C\la_1^{-\frac12}\|\eta\|_1\|w\|_1^2\leq \nu\|\eta\|_1^2+
\frac{C}{4\nu\la_1}\|w\|_1^4.
$$
Hence, we obtain from (\ref{da3})
\begin{equation}\label{da4}
\frac{d}{dt}\left[\|\eta\|^2+\al^2\|\eta\|_1^2\right]\leq
\frac{CM_1}4(\|\eta\|^2+3\|
\eta\|_1^2)+\frac{C}{4\nu\la_1}\|w\|_1^4.
\end{equation}
The function $w(t)=v(t)-u(t)=S(t)v_0-S(t)u_0$ satisfies
$$
w_t+\al^2 Aw_t+\nu Aw+B(w,v)+B(v,w)-B(w,w)=0, \ w(0)= v_0-u_0:=w_0.
$$
Taking the inner product of the last equation with $w$,  and using
\eqref{bf2} and  $(E_2)$ we obtain
\begin{multline*}
\frac d{dt}\left[\|w\|^2+\al^2\| w\|_1^2\right]+2\nu
\|w\|_1^2=2b(w,v,w)\leq 2C\lambda_1^{1/4}\|v\|_1\|w\|_1^{2} \leq\\
\kappa_1\|v\|_1\left[\|w\|^2+\al^2\| w\|_1^2\right],
\end{multline*}
where
$k_1=2C\lambda_1^{1/4}\alpha^{-4}\left[\|v(0)\|^2+\alpha^2\|v(0)\|_1^2+(1/\nu
d_0)\|h\|_{-1}^2\right]^{1/2}$. Integrating last inequality  we get
\begin{equation}\label{da5}
\|w(t)\|_1^2\leq (1+\frac1{\la_1\al^2})\| w(0)\|_1^2\exp(\kappa_1t).
\end{equation}
It follows from (\ref{da4}) and (\ref{da5}) that
$$
\frac{d}{dt}\left[\|\eta\|^2+\al^2\|\eta\|_1^2\right]\leq A_1
\left[\|\eta\|^2+\al^2\|\eta\|_1^2\right]+A_2\|
w(0)\|_1^4\exp(2\kappa_1t).
$$
Integrating and using Gronwall's inequality :
$$
\|\eta(t)\|_1^2\leq A(t)\|w(0)\|_1^4,
$$
where $A(t):=\frac{A_2}{2\kappa_1\al^2}\exp(2\kappa_1+A_1)t.$ So we
have
$$
\frac{\|v(t)-u(t)-\xi(t)\|_1}{\|v_0-u_0\|_1}\leq
\sqrt{A(t)}\|v_0-u_0\|_1.
$$
Thus the differentiability of $S(t)$ with
respect to the initial data  follows.\\
\noindent We rewrite (\ref{a4}) in the following form
\begin{equation}\label{da6}
\hat{v}_t=-\frac{\nu}{\al^2}\hat{v}+\frac{\nu}{\al^2}G^{-2}\hat{v}-G^{-1}B(G^{-1}\hat{v},G^{-1}\hat{v})+G^{-1}h,
\end{equation}
where $G^2 =I+\al^2 A,$ and $ \hat{v}=Gv$.\\
The equation of linear variations corresponding to (\ref{da6}) has
the form
\begin{equation}\label{da7}
w_t= L(t)w,
\end{equation}
where
$$
L(t)w:=-
\frac{\nu}{\al^2}w+\frac{\nu}{\al^2}G^{-2}w-G^{-1}B(G^{-1}w,
G^{-1}\hat{v})-G^{-1}B(G^{-1}\hat{v},G^{-1}w).
$$
Now we consider the quadratic form
$$
(L(t)w,w)=- \frac{\nu}{\al^2}\|w\|^2 +
\frac{\nu}{\al^2}\|G^{-1}w\|^2 -b(G^{-1}w,G^{-1}\hat{v},G^{-1}w).
$$
By using inequality \eqref{bf1} and the inequality
$\|G^{-1}u\|_1\leq \frac{1}{\alpha}\|u\|$ we get
$$
|b(G^{-1}w,G^{-1}\hat{v},G^{-1}w)|\leq
\frac{1}{\alpha^{5/2}}\|G^{-1}w\|^{1/2}\|w\|^{3/2}\|\hat{v}\|.
$$

\noindent Employing Young's inequality with $p=4/3,
\epsilon=2\nu/(3\alpha^2)$, and the fact that on the global
attractor $\mathcal{A}_1$ the estimate $\|\hat{v}\|\leq
(\lambda_1+\alpha^2)^{1/2} M_1$ holds, we obtain
$$
|b(G^{-1}w,G^{-1}\hat{v},G^{-1}w)|\leq \frac{\nu}{2\al^2}\|w\|^2
+\frac{C(\lambda_1+\alpha^2)^2M_1^4}{\nu^3\alpha^4}\|G^{-1}w\|^2.
$$
Due to the last inequality the quadratic form $(L(t)w,w)$ has the
following estimate
\begin{equation}\label{da8}
(L(t)w,w)\leq - \frac{\nu}{2\al^2}\|w\|^2
+\left(\frac{\nu}{\al^2}+\frac{C(\lambda_1+\alpha^2)^2M_1^4}{\nu^3\alpha^4}\right)\|G^{-1}w\|^2
\end{equation}
\noindent Thus, we can use Theorem \ref{frdim} to get the desired
estimate for the  fractal dimension of the attractor $\mathcal{A}_1$
\begin{equation}\label{da9}
d_f(\mathcal{A}_1)\leq
C\frac{(\lambda_1+\alpha^2)^2M_1^4}{\nu^4\alpha^2}+2\leq
C\frac{(\lambda_1+\alpha^2)^2\|h\|_{-1}^4}{\nu^8\alpha^6d_1^2}+2.
\end{equation}
We recall that $ M_1=\frac{2}{\nu\al\sqrt{ d_1}}\|h\|_{-1}, \ \ d_1=
\frac12\min\{\alpha^{-2},\lambda_1\}.$ \noindent Let us note that in
our situation
$$\bar{h}_0(t)= \frac{\nu}{2\al^2}, s_0=0, {s_1}=-1,
\bar{h}_{s_1}(t)=\frac{\nu}{\al^2}+\frac{C(\lambda_1+\alpha^2)^2M_1^4}{\nu^3\alpha^4}
$$
and $\bar{h}_{s_k}(t)= 0, k\geq 2$.\\
\end{proof}

\section{ The Inviscid Limit}

\noindent Here  we  show that when $\nu \rightarrow 0 $ the weak
solution of the initial boundary value problem for the NSV system,
i.e. of the problem \eqref{a1}-\eqref{a3}, is tending to the weak
solution of the initial boundary value problem for the inviscid
simplified Bardina model
\begin{equation}\label{Ba1}
\begin{array}{c}
 u_t-\al^2\Dx u_t+(u\cdot \Nx )u+\Nx p=f,  \ \ x \in \Om, t>0, \\ \Nx \cdot  u
 =0, \ \  x \in \Om, t>0,
\end{array}
\end{equation}
\begin{equation}\label{Ba2}
 u(x,0)=v_0(x),\  x \in \Om; \ \ u(x,t)= 0, \ x \in \partial \Om,
t>0.
\end{equation}

The problem of existence and uniqueness of solutions of the initial
boundary value problem, with periodic boundary conditions, for  the
3D viscous and inviscid simplified Bardina models is studied in
\cite{CLT}. In particular, it is shown in \cite{CLT} that the
problem \eqref{Ba1}-\eqref{Ba2} has a unique solution $u\in
C^1(\R;V)$, for  initial value $u_0\in V$.\\
\noindent Applying to \eqref{Ba1} the Helmholtz-Leray operator $P$
we obtain the equivalent functional differential equation
\begin{equation}\label{Ba3}
 u_t+ \al^2Au_t +B(u,u)=h,
\end{equation}
\begin{equation}\label{Ba4}
u(0)=v_0.
\end{equation}
Let $v(t)$ be the solution of \eqref{Ba1} with initial $v(0)=v_0\in
V$. Denote by $w=v-u$. Then $w$ satisfies the relation
\begin{equation}\label{Ba4}
 w_t+ \al^2Aw_t +B(w,v)+B(u,w)=-\nu Av,
\end{equation}
\begin{equation}\label{Ba5}
w(0)=0,
\end{equation}
which holds in the space $V'$. Taking the action of \eqref{Ba4} on
$w$, which belongs to $V$, and using a Lemma of Lions-Magenes
concerning the derivative of functions with values in Banach space
(cf. Lemma 1.2 Chap. III-p.169-\cite{TT84}), we obtain
\begin{equation}\label{Ba6}
\frac d{dt}\left[\|w\|^2+\al^2\|w\|_1^2\right] =-2\nu (\Nx v, \Nx
w)- 2b(w,v,w).
\end{equation}
For the first term in the right-hand side we have
$$
|2\nu (\Nx v, \Nx w)|\leq \nu^2\|v\|_1^2+\| w\|_1^2.
$$
The second term we estimate by using the  inequality \eqref{bf2}
$$
|2b(w,v,w)|\leq C\la_1^{1/4}\|v\|_1\|w\|_1^2.
$$
Utilizing last two inequalities in \eqref{Ba6} we get
\begin{multline*}
\frac d{dt}\left[\|w\|^2+\al^2\|w\|_1^2\right] \leq
\nu^2\|v\|_1^2+ \left(1+C\la_1^{1/4}\|v\|_1\right)\|w\|_1^2\leq\\
\nu^2\|v\|_1^2+
\al^{-2}\left(1+2C\la_1^{1/4}\|v\|_1\right)\left[\|w\|^2+\al^2\|w\|_1^2\right].
\end{multline*}
Integrating the last inequality and using the standard Gronwall's
lemma we get the estimate
\begin{multline}\label{Ba7}
\|w(t)\|^2+\al^2\|w(t)\|_1^2 \leq \nu^2 \int_0^t\|v(\tau)\|_1^2d\tau
\exp\left(\frac
t{\al^2}+\frac{2C\la_1^{1/4}}{\al^2}\int_0^t\|v(\tau)\|_1d\tau\right).
\end{multline}
Next we show that on each finite interval $[0,T]$ we can estimate
$\|v\|_1$ by a constant depending only on $\|v_0\|,\|v_0\|_1$ and
the parameter $\alpha$. Indeed, \eqref{a6} implies that
$$
\frac{d}{dt}\left[\|v(t)\|^2+ \al^2\|v(t)\|_1^2\right]\leq
\alpha^{-2}\|h\|_{-1}^2+ \alpha^2\|v(t)\|_1^2.
$$
Integrating the last inequality over $(0,t)$ with respect to time
variable we obtain
\begin{multline*}
\|v(t)\|^2+\alpha^2\|v(t)\|^2_1\leq \|v_0\|^2 +\alpha^2\| v_0\|^2_1+
t\alpha^{-2}\|h\|_{-1}^2+ \int_0^t\left[\|v(\tau)\|^2+\alpha^2\|
v(\tau)\|^2_1\right]d\tau.
\end{multline*}
By using the Gronwall inequality we get
$$
\|v(t)\|^2_1\leq D_Te^T, \, \mbox{for all} \, t \in [0,T].
$$
Here $D_T:= \alpha^{-2}\left[\|v_0\|^2 +\alpha^2\| v_0\|^2_1+
T\alpha^{-2}\|h\|_{-1}^2\right].$\\

\noindent  Hence \eqref{Ba7} implies

\begin{equation}\label{Ba7a}
\|w(t)\|^2+\al^2\|w(t)\|_1^2 \leq  \nu^2 TD_Te^T
\exp\left(\alpha^{-2}T+2C\alpha^{-2}\lambda^{1/4}TD_T^{1/2}e^{T/2}\right).
\end{equation}

\begin{remark}\label{nu} The problem of convergence of solutions of
the NSV equations to solutions of NS equations as $\alpha
\rightarrow 0$ was studied in \cite{Osk2}. It is shown in
\cite{Osk2} that strong solutions of the NSV equations converge to
strong solutions of the NS equations as $\alpha \rightarrow 0$,
under specified smallness conditions on the initial data of the
problem.
\end{remark}

\begin{remark}\label{bc} The results obtained in this paper are
valid also for the solutions of the initial boundary value problem
for the 3D NSV equations with periodic boundary conditions.
\end{remark}

\noindent Finally we would like to notice that the results reported
here can be extended to other similar equations, a subject of future
work. For instance, for the 3D equations of motion of Kelvin-Voight
fluids
of order $L\geq 1$:\\
$$
v_t+(v\cdot \Nx)v-\mu_0\Dx v_t -\mu_1\Dx v-\sum_{l=1}^L\beta_l\Dx
u_l+\Nx p=f,
$$
$$
\partial_t u_l+\alpha_lu_l-v=0, \ \
l=1,...,L.
$$
\noindent where $\mu_0,\mu_1, \beta_l, \alpha_l>0, l=1,...,L.$ Also
for the generalized Benajamin-Bona-Mahony (GBBM)
equation:\\
\begin{equation}\label{bbm1}
u_t- \alpha^2\Dx u_t + \nu \Dx u + \Nx \cdot \vec{F}(u)=h,
\end{equation}
where  a smooth vector field $\vec{F}(u)$ satisfies  the   growth
condition
$$
|\vec{F}(u)|\leq C(1 + |u|^2).
$$
\noindent The problem of existence of a finite dimensional global
attractor and estimates for the number of determining modes on the
global attractor of  Kelvin-Voight fluids of order $L\geq 1$ is
established in \cite{KKO}. In \cite{WY} the existence of a finite
dimensional  global attractor is established for 1D GBBM equation
under periodic boundary conditions. Existence of a finite
dimensional global attractor for 3D GBBM under periodic boundary
conditions is proved in \cite{CKP}. In \cite{SSW} it was shown the
existence of the global attractor for GBBM equation in $H^1(\R^3)$.
Moreover, the existence of a global attractor for a similar
two-dimensional model describing the
motion of a second-grade fluid is established in \cite{MRW}.\\

\noindent {\bf Acknowledgements.}  The work of the V.~K.~Kalantarov
was supported in parts by The Scientific and Research Council of
Turkey, grant no. 106T337. The work of E.~S.~Titi was supported in
parts by the NSF grant no.~DMS-0504619, the ISF grant no.~120/6, and
the BSF grant no.~2004271.

\end{document}